\newif\ifPreprint \Preprintfalse
\newif\ifSubmission \Submissiontrue
\patchcmd{\@settitle}{\uppercasenonmath\@title}{\scshape\large}{}{}
\patchcmd{\@setauthors}{\MakeUppercase}{\scshape\normalsize}{}{}
\theoremstyle{plain}
\newtheorem{lemma}{Lemma}
\newtheorem{theorem}{Theorem}
\theoremstyle{definition}
\newtheorem{example}{Example}
\newtheorem{prop}{Proposition}
\newtheorem{remark}{Remark}
\theoremstyle{remark}
\newcommand{\st}{\text{s.t.}}
\newcommand{\Tr}{\text{Tr}}
\newcommand{\R}{\mathbb{R}}
\newcommand{\N}{\mathbb{N}}
\newcommand{\Z}{\mathbb{Z}}
\newcommand{\Ccal}{\mathcal{C}}
\newcommand{\Mcal}{\mathcal{M}}
\newcommand{\Ncal}{\mathcal{N}}
\newcommand{\Pcal}{\mathcal{P}}
\newcommand{\Scal}{\mathcal{S}}
\begin{document}

\title[Safe Approximation of Multivariate DRO]{A Positive Semidefinite Safe Approximation of
  Multivariate Distributionally Robust Constraints Determined by Simple Functions} 
\author[J. Dienstbier, F. Liers, J. Rolfes]%
{J. Dienstbier, F. Liers, J. Rolfes}

\address[J. Dienstbier, F. Liers, J. Rolfes]{%
  Friedrich-Alexander-Universität Erlangen-Nürnberg,
  Cauerstr. 11,
  91058 Erlangen,
  Germany}
\email{\{jana.jd.dienstbier, frauke.liers, jan.rolfes\}@fau.de}

\newcommand{\JD}[1]{\todo[author=JD,color=green!50,size=\small]{#1}}
\newcommand{\JDil}[1]{\todo[inline,author=JD,color=green!50,size=\small]{#1}}
\newcommand{\FL}[1]{\todo[author=FL,color=red!50,size=\small]{#1}}
\newcommand{\FLil}[1]{\todo[inline,author=FL,color=red!50,size=\small]{#1}}
\newcommand{\JR}[1]{\todo[author=JR,color=orange!50,size=\small]{#1}}
\newcommand{\JRil}[1]{\todo[inline,author=JR,color=orange!50,size=\small]{#1}}
\newcommand{\RTD}[1]{\rho({#1})}
\newcommand{\uRTD}[2]{\rho_{#1}({#2})}
\newcommand{\mass}[1]{\mathbbm{P}_{#1}}
\newcommand{\EW}[1]{\mu_{#1}}
\newcommand{\varianz}[1]{\sigma_{#1}}
\newcommand{\bound}[2]{\bar{\rho}_{#1}({#2})}
\newcommand{\boundvariable}[1]{z_{#1}}
\newcommand{\lift}[2]{\Delta_{#1}^{#2}}
\newcommand{\minparabel}{p_{\text{min}}}
\newcommand{\sign}{\text{sign}}
\newcommand{\E}{\mathbbm{E}}
\renewcommand{\P}{\mathbbm{P}}

\date{\today}

\begin{abstract}
  Single-level reformulations of (nonconvex) distributionally robust optimization (DRO)
problems are often intractable, as they contain semiinfinite dual constraints. Based on such a semiinfinite reformulation, we present a safe approximation, that allows for the computation of feasible solutions for DROs that depend on nonconvex multivariate simple functions. Moreover, the approximation allows to address ambiguity sets that can incorporate information on moments as well as confidence sets. 
The typical strong assumptions on the structure of the underlying
constraints, such as convexity in the decisions or concavity in the
uncertainty found in the literature were, at least in part, recently
overcome in \cite{Dienstbier2023a}. We start from the duality-based
reformulation approach
in \cite{Dienstbier2023a} that can be applied for DRO constraints based on simple
functions that are univariate in the uncertainty parameters. We
significantly extend their approach to multivariate simple functions which
leads to a considerably wider applicability of the proposed
reformulation approach. In
order to achieve algorithmic tractability, the presented safe
approximation is then realized by a discretized counterpart for the
semiinfinite dual constraints. The approximation leads to a
computationally tractable mixed-integer positive semidefinite problem
for which state-of-the-art software implementations are readily available.
The tractable safe approximation provides sufficient conditions for
distributional robustness of the original problem, i.e., obtained
solutions are provably robust.

\end{abstract}

\keywords{Distributionally Robust Optimization,
Robust Optimization,
Stochastic Optimization,
Mixed-Integer Optimization,
Discrete Optimization%
%

\makeatletter
\@namedef{subjclassname@2020}{%
	\textup{2020} Mathematics Subject Classification}
\makeatother

\subjclass[2020]{
90Cxx, 
90C11, 
90C17, 
90C22, 
90C34
}

\maketitle


\section{Introduction}
\label{Sec:introduction}

In this work, we consider distributionally robust
optimization (DRO) models that are governed by multivariate simple
functions that appear in many relevant contexts. Despite their nonconvexity, we aim for algorithmically
tractable approximations that are based on duality arguments. The resulting
solutions yield a safe approximation which means that they are
guaranteed to be robust for the original constraints.

The approach presented here starts from the considerations in
\cite{Dienstbier2023a} for constraints that are univariate in the
uncertain parameters and generalizes the approach to the considerably
more general case of constraints that are multivariate in the
uncertainty. In the latter approach, a safe approximation was
developed that leads to a mixed-integer linear optimization
problem. Despite the NP-hardness of the latter, 
practically efficient algorithms and software are readily
available. In addition, it could be proven that the safe approximation
is asymptotically correct, i.e., it does not only yield robust
solutions, but aysmptotically solves the original distributionally
robust problem. In our generalization to the multivariate setting, we
use the same notation as in \cite{Dienstbier2023a}. For completeness
of the exposition, we repeat the necessary ingredients. Let $x\in
\R^n$ denote the decision variables, $b\in \R$ a scalar, $\Pcal$ a set of probability measures on the compact domain $T\subseteq \R^m$. {We then model the uncertainty in our DRO with a random vector $t\in T$ distributed according to an (uncertain) probability measure $\P\in \Pcal$.} 
As typical in (distributionally) robust optimization, the
task consists in determining decisions $x$ that are feasible even in case the uncertain {probability measures} are chosen in an adversarial way which coined the name 'adversary'. In addition, in case of an
optimization model, the chosen robust solution shall lead to a best possible guaranteed objective value. 
Here, $v:\R^n\times T\rightarrow \R$ denotes a (possibly nonconvex) function that connects the decision variables $x$ with the random vector $t$. Then, a \emph{distributionally robust constraint} or DRO constraint is defined by

\begin{equation}\label{Eq: DRO_constraint}
		b\leq \min_{\P\in \Pcal} \E_{\P}\left(v(x,t)\right).
\end{equation}

Constraints of this form contain both the purely stochastic as well as
the robust models as special cases. Indeed,
setting $\Pcal=\{\P\}$, leads to a \emph{stochastic
  constraint}: 
\begin{equation*}
	b\leq \E_{\P}\left(v(x,t)\right).
\end{equation*}
Stochastic optimization has been established
for situations when uncertainty
is distributed by some (known) distribution or when constraints must be met
with a certain probability. It hedges against uncertainty in a
probabilistic sense and implicitely assumes that the underlying distributions can be closely
approximated or is even known exactly. We refer to
\cite{birge2006introduction} for a gentle introduction on stochastic
optimization and to the surveys \cite{prekopa1998SO} and
\cite{Shapiro2003} particularly for discrete random variables.

Setting $\Pcal=\{\delta_t: t\in T\}$, where $\delta_t$ denotes the
Dirac point measures at $t\in T$, \eqref{Eq: DRO_constraint} yields a \emph{robust constraint}
\begin{equation*}
	b\leq \min_{t\in T} v(x,t).
\end{equation*}
\bigskip
While more details on related literature will be given in a later
section, we mention here the
introductory textbooks for
continuous robust optimization~\cite{ben2000robust},
\cite{bertsimas2022robust} as well as on combinatorial robust
optimization~\cite{kouvelis97}, \cite{GoerigkHartisch24}.

In stochastic optimization, a large variety of efficient and elegant
models and solution approaches have been established. However, in
applications the underlying distribution are often unknown, which may
result in low-quality or even infeasible results in case the
underlying assumptions on the
distributions are not satisfied.

In contrast, robust
optimization offers a natural alternative, whereby uncertainty sets
are established a priori. Feasibility of an obtained solution is guaranteed for all possible outcomes of
uncertainty within the uncertainty sets. A solution with best
guaranteed value is determined. Modelling and algorithmical aproaches
consist in
(duality-based) reformulations of the semi-infinite or exponentially
large robust counterparts, if it is allowed by underlying structural
assumptions such as convexity or more generally some underlying
duality theory. 
If this is not possible, then decomposition algorithms
are developed, possibly together with some approximation
approaches if still the underlying robust problems are too demanding
to solve.  

In this work, we focus on distributional robust optimization (DRO). DRO determines robust solutions that are protected against uncertainty in the underlying distributions. These distributions are
assumed to reside in a so-called ambiguity
set of probability measures, denoted by $\Pcal$ above.
For distributionally robust optimization, we refer to the detailed surveys \cite{Rahimian2019a} and
\cite{Fengmin2021a} as well as the references therein.

Generalizing from \cite{Dienstbier2023a}, we here allow the presence of multivariate simple
functions $v$, i.e. $\text{dim}(T)=m>1$. The latter are basic building blocks in Lebesgue integrals.
As simple functions are nonconvex, we cannot expect to derive an equivalent reformulation of the DRO model. However, our main contribution lies in the derivation of a mixed-integer positive semidefinite
safe approximation, i.e., all obtained solutions are guaranteed to be robust.
Due to the availability of state-of-the-art
software implementations for mixed-integer positive semidefinite
optimization, this proves the computational tractability of our
modelling approach. 


This work is structured as follows.
Section \ref{Sec: Problem Setting} introduces the distributionally
robust model including simple functions, together with motivation
and illustrative examples. Subsequently, Section \ref{Sec: DRO_indicator_functions} 
presents a new semi-infinite inner approximation of the robust
counterpart, along with a suitable discretization. The result consists
in a novel finite-dimensional mixed-integer positive semidefinite
optimization model. The main contribution consists in showing that its
feasible solutions are also 
feasible for the original robust DRO model.

\section{Literature Review}
\label{Sec:literaturereview}

Next,
we briefly review some relevant literature in optimization under
uncertainty, and distributional robustness in particular. 
Next to the textbooks \cite{ben2000robust},
\cite{bertsimas2022robust}, \cite{kouvelis97}, \cite{GoerigkHartisch24}
mentioned above, relevant
literature on robust optimization starts from the first treatments of linear
optimization with uncertain coefficients~\cite{soyster1973convex} to a
systematic study of linear optimization under uncertainty in e.g., 
\cite{ben1998robust}, \cite{ben1999robust}. In these approaches,
duality-based reformulations have been
developed that lead to algorithmically tractable robust counterparts
for wich practically usable solution approaches exist or
software packages are available.

In order to push duality-based reformulation approaches even beyond linear
and convex optimization, a wide variety of reformulations have been presented in
\cite{Fenchel}. If an underlying duality theory
cannot be assumed, often decomposition approaches are developed. This
is in particular the case for nonconvex robust optimization where the
optimization problem is nonconvex in the uncertainty. A practically
efficient solution framework is given by an adaptive bundle approach~\cite{kuchlbauer2022adaptive}
which has been integrated in an outer approximation procedure in \cite{kuchlbauerOuter} for
additional discrete decisions. We refer to the survey
\cite{NonlinearRO} for additional references for nonlinear robustness.


Robust and stochastic constraints can be integrated either via so-called 'probust
functions', see e.g. \cite{adelhuette2021joint} or \cite{Berthold2021}, or via distributional
robustness (DRO) from formula \eqref{Eq: DRO_constraint}. Such integrated robust-probabilistic
models contain advantages of both worlds, namely full protection as in
the robust world together with limited prize of uncertainty protection
as in stochastic optimization.


DRO surveys are presented in \cite{Rahimian2019a} and
\cite{Fengmin2021a}.


It is widely accepted that the right choice of ambiguity set is
crucial both with respect to algorithmic tractability of the resulting
robust counterparts as well as with
respect to the obtained solution quality. Indeed, the ambiguity sets
shall be chosen that the most relevant uncertainties are considered,
while taking available partial information into account, but
simultaneously that overconservative solutions are avoided. 

Discrepancy-based ambiguity sets assume a nominal, 'typical', distribution
and include distributions within a certain distance of
it, where Wasserstein-balls are natural distances
\cite{mohajerin2018data}.
Ambiguity sets have for example also been derived from phi-divergence \cite{gotoh2018},
from likelihood ambiguity sets in \cite{Wang2015}, as well as from statistical hypothesis tests \cite{Bertsimas2017}.

Going beyond convex models, in our approach we allow the presence of nonconvex simple functions and mainly focus on moment-based ambiguity sets. The
moments of distributions are uncertain but are assumed to satisfy predetermined
bounds. For convex models, mean-variance or Value-at-Risk measures are
studied in \cite{ghaoui2003worst}, whereas moment information is used in
\cite{Popescu2007a}. The article \cite{Xu2017a} uses Slater conditions to show the
correctness of a duality-based reformulation of the robust
counterpart, together with discretization schemes to determine
approximate solutions. \cite{Delage2010a} presents exact reformulations of
convex DRO problems, where confidence regions of some moments are considered.



For convex models, some recent works combine partial information based on discrepancies as
well as on moments of the distribution, to define 'tight' ambiguity sets.
In this flavor, in \cite{Cheramin2022} the authors derive efficient inner and outer approximations
for DRO where both moment as well as Wasserstein ambiguity sets can be
used simultaneously. 

One of the challenges of incorporating additional information into
moment-based ambiguity sets is addressed by the authors of
\cite{Parys2015a}, who provide a positive semidefinite (SDP) reformulation of \eqref{Eq:
  DRO_constraint} for cases where the probability distribution is
known to be unimodal and the moments are fixed. 
On the other hand, \cite{Wiesemann2014a} presents a
duality-based reformulation of \eqref{Eq: DRO_constraint} that
incorporates information on the confidence sets and assumes
convex optimization problems. Under these assumptions, the approach can be applied to a DRO with \eqref{Eq: DRO_constraint} as a constraint.

The recent work \cite{Luo2023} also allows hybrid ambiguity
sets by enriching Wasserstein balls with additional moment information. For discrete
decisions, approximations are presented. 

While many existing approaches consider static DRO problems,
\cite{aigner2023} learn ambiguity sets and robust decisions for DRO problems with discrete probability distributions that repeat over time. 

In \cite{Bayraksan}, the authors go a step further and consider
multi-stage DRO settings. Via scenario grouping, they present bounds taking conditional
ambiguity sets into account that occur in the multi-stage
mixed-integer DRO setting. 

In our work, we use moment-based ambiguity sets similar to 
\cite{Delage2010a}, which consider mean and covariance matrix ranges
along with confidence set information as in
\cite{Wiesemann2014a}. 
Our main contribution is to allow the presence of nonconvexities which considerably extends
existing reformulation approaches. Indeed, in addition to
being able to model tight ambiguity sets, we also allow that the optimization
models contain
multivariate nonconvex simple functions. These functions can approximate any, even
nonconvex, continuous function. 

Due to these
nonconvexities, standard reformulation approaches based on duality
cannot be applied. In order to apply them nevertheless, we first approximate the nonconvexities
appropriately by convex functions for which we then present reformulations to
optimization problems in function space. \cite{Dienstbier2023a} considers the
univariate case and presents a safe approximation that is based on
mixed-integer linear constraints. In addition, they could prove that
the safe approximation converges to the true robust counterpart
solution, rendering the approximation asymptotically a correct
equivalent reformulation. For the multivariate setting considered
here, appropriate discretizations result in 
mixed-integer positive-semidefinite optimization problems. The latter are algorithmically
tractable and can be solved via available software. As a result, we
present reformulation approaches for such nonconvex multivariate
DRO problems that allow algorithmically tractable
solution of the resulting robust counterparts. 


\section{Problem Setting and Notation}\label{Sec: Problem Setting}
We stick to the notation from \cite{Dienstbier2023a} and
summarize the main modelling here for completeness of our exposition.

\subsection{DRO Constraints Containing Simple Functions}
The DRO constraints considered in the present article are defined by functions $v(x,t)$ that consist of multivariate \emph{simple functions}, i.e., finite linear combinations of indicator functions:
$$v(x,t)=\sum_{i=1}^k x_i \mathbbm{1}_{X_i}(t), \text{ where } \mathbbm{1}_{X_i}(t)\coloneqq \begin{cases}
	1 & \text{ if } t\in X_i\\
	0 & \text{ otherwise.}
\end{cases}$$
The functions of type $\mathbbm{1}_{X_i}$ are denoted as \emph{indicator functions} as they indicate whether $t\in X_i$ holds or not. The sets $X_i$ can be considered as events in the probability space given by $\P$. In fact, considering functions $v$ as above in \eqref{Eq: DRO_constraint} leads to 
$$\E_{\P}(v(x,t)) = \E_{\P}\left(\sum_{i=1}^k x_i\mathbbm{1}_{X_i}(t)\right) = \sum_{i=1}^k x_i \mathbbm{P}(X_i)$$
and consequently the following formulation of \eqref{Eq: DRO_constraint}:
\begin{equation}\label{Eq: DRO_constraint_Prob}
	b \leq \min_{\mathbbm{P}\in \Pcal} \sum_{i=1}^k x_i \mathbbm{P}(X_i).
\end{equation} 
We note, that one may see \eqref{Eq: DRO_constraint_Prob} as a robust chance constraint, that is allowed to consist of simple functions. The decisions may either influence the height $x_i$ of an indicator
function, see Case 1, or will determine the underlying domains
$X_i$, see Case 2. In the remainder of this paper, we will investigate both
situations separately to ease the presentation. However, the safe approximation presented in Theorem \ref{Thm: MIP_multidim} can be extended to incorporate both cases simultaneously.

\textbf{Case 1:} Suppose that the sets $X_i\subseteq \R^m$ are given, then we ask for optimal decisions $x_i$ such that the DRO constraint \eqref{Eq: DRO_constraint} is satisfied.
\begin{subequations}\label{Prob: Case1}
	\begin{align}
		\max_{x\in C}\ & c(x): \\
		\st\ & b \leq \min_{\mathbbm{P}\in \Pcal} \sum_{i=1}^k x_i \mathbbm{P}(X_i),\label{Constr: Case1}
	\end{align}
\end{subequations}
where, $c:\R^n\rightarrow \R$ denotes a concave objective function, $C\subseteq \R^n$ denotes a set of additional convex constraints. Note, that in Case 1, we have that $n=k$.

We demonstrate the generality of \eqref{Prob: Case1} by an academic example on the mean-variance model from portfolio optimization, see Example 3 in \cite{Sengupta1985a}: 
To this end, suppose one aims to minimize the risk of a
portfolio. Moreover, one only has $n$ risky assets $A_i$
available. Let these assets provide a revenue $r_i$ in case of an
event $X_i$ and $0$ otherwise, i.e. $A_i=r_i \mathbbm{1}_{X_i}$ and
let the $A_i$ be independently, identically distributed with
probability $\P\in \Pcal$, where $\Pcal$ denotes a pre-defined
ambiguity set as described in Section \ref{Sec:introduction}. Assume that the covariance matrix of the assets $A_i$ is dominated by a matrix $\Sigma$, i.e., $0\preceq \text{Var}(A)\preceq \Sigma$ and we ask for a guaranteed revenue $w$ of our portfolio.

Then, the mean-variance model reads:
$$\min_x x^\top \Sigma x: \min_{\P\in \Pcal} \E_{\P}\left(\sum_{i=1}^n x_iA_i\right) \geq w, \sum_{i=1}^n x_i=1, x\geq 0,$$
which for i.i.d. assets $A_i$ is equivalent to
$$-\max_x -\sum_{i=1}^n \sigma_i x_i^2:\ \min_{\P\in \Pcal} \sum_{i=1}^n x_ir_i\P(X_i) \geq w, \sum_{i=1}^n x_i=1, x\geq 0.$$
This is indeed a special case of \eqref{Prob: Case1} since nonnegative
$\sigma_i,x_i$ lead to a concave objective
function and 
$\left\{x\in \R^n: \sum_{i=1}^n x_i=1, x\geq 0\right\}$ denotes a convex set for which e.g. the methods from \cite{Wiesemann2014a} can be applied. Thus, although addressing Case 1 as well, in the present article we focus on the following case, where we consider the sets $X_i$ as decision variables.

\textbf{Case 2:} Suppose the coefficients $x_i$ are given
parameters and the sets $X_i=[x_i^-,x_i^+]\subseteq
\R^m$ determine hypercubes. Consider the
boundaries of these hypercubes as decision variables. In addition, we assume w.l.o.g. that $X_i\subseteq T$ for well-posedness of $\mathbbm{P}(X_i)$ and additionally assume a linear objective function for ease of presentation. In particular, we consider:

\begin{subequations}\label{Prob: Case2}
	\begin{align}
		\max_{((x^-)^\top,(x^+)^\top)\in C}\ & \sum_{i=1}^k\sum_{j=1}^m c^-_{ij}x^-_{ij}+c^+_{ij}x^+_{ij}: \\
		\st\ & b \leq \min_{\mathbbm{P}\in \Pcal} \sum_{i=1}^k x_i \mathbbm{P}([x_i^-,x_i^+]),\label{Constr: Case2}
	\end{align}
\end{subequations}
where $C\subseteq \R^{2km}$ denotes a polytope of $n=2k$ decision vectors, each of dimension $m$. Note, that Case 2 appears to be more challenging than Case 1 as the function
$$v(x^-,x^+,t)\coloneqq \sum_{i=1}^k x_i\mathbbm{1}_{[x_i^-,x_i^+]}(t)$$
is not only nonconvex in $t$ but also in $((x_i^-)^\top,(x_i^+)^\top)$. Despite of this mathematical challenge, this case already covers interesting applications in chemical separation processes as is illustrated in \cite{Dienstbier2023a}. 
%

Let us now introduce essential notation and concepts. We
refer to \cite{Barvinok2002a} and \cite{Shapiro2000a}
for more information. The main challenges in Problems
\eqref{Prob: Case1} and \eqref{Prob: Case2} arise from the DRO constraints \eqref{Constr: Case1} and \eqref{Constr: Case2}, since these constraints cannot be formulated with the canonic Euclidean inner product. Consequently, standard
reformulation arguments from robust optimization such as replacing the inner
adversarial optimization problem by the feasible region of its
dual and solve the resulting model as a
standard finite-dimensional convex problem, do not apply. However, the following inner product, illustrated in Section III.3.2 in \cite{Barvinok2002a}, allows a similar reformulation of \eqref{Constr: Case1} and \eqref{Constr: Case2}:

Let $\P$ denote a probability measure on the compact domain $T$ that is defined by a probability density $\uRTD{}{t}$, i.e. $d\P = \uRTD{}{t}dt.$
According to Riesz-Markov-Kakutani representation theorem $\P$ is unique, i.e. it is the only solution that satisfies $I(f)=\int f d\P$
for the linear functional $I:\Ccal(T)\rightarrow \R$ defined by $I(f)\coloneqq\int_0^T f(t)\uRTD{}{t}dt.$
The corresponding inner product
$$\langle f, \P\rangle \coloneqq \int_T f d\P$$
then constitutes a \emph{duality}, i.e. a non-degenerate inner
product. Moreover, this duality is more generally defined on
\emph{signed Radon measures}, denoted by $\Mcal(T)$. 

Suppose, we know a continuous approximation of the indicator function $\mathbbm{1}_{X_i}$, denoted by $\mathbbm{1}_{X_i}^c$. Then, we observe that the above product $\langle \cdot , \cdot \rangle: \Ccal(T)\times \Mcal(T)\rightarrow \R$, enables us to approximate \eqref{Eq: DRO_constraint_Prob} via the function $\sum_{i=1}^k x_i\mathbbm{1}_{X_i}\in \Ccal(T)$ and the probability measure $\P\in \Mcal(T)$ as follows:
\begin{subequations}\label{Prob: DRO_with_undefined_ambiguity_set}
	\begin{align}
		b \le \min_{\P}~&  \langle \sum_{i=1}^k x_i\mathbbm{1}_{X_i}^c,\mass{}\rangle & \\
		\text{s.t.}~& \mass{} \in \mathcal{M}(T)_{\ge 0}\label{Constr: DRO_with_undefined_ambiguity_set_1}\\
		&\langle 1, \mass{} \rangle \ge 1 \label{Constr: DRO_with_undefined_ambiguity_set_norm_1}\\
		&\langle -1, \mass{} \rangle \ge -1, \label{Constr: DRO_with_undefined_ambiguity_set_norm_2}
	\end{align}
\end{subequations}
where $\Mcal(T)_{\geq 0}$ denotes the cone of nonnegative Radon measures. Furthermore, Constraints \eqref{Constr: DRO_with_undefined_ambiguity_set_1} -- \eqref{Constr: DRO_with_undefined_ambiguity_set_norm_2} require $\mass{}$ to be a probability measure.

\subsection{Strenghtening DRO Models by Moment Control and Confidence Sets}

One of the major challenges in distributional robustness consists in
choosing additional constraints for \eqref{Prob: DRO_with_undefined_ambiguity_set} is on the one hand algorithmically tractable, but on the other hand also large enough to protect the solutions $x$ (in Case 1) and $x^-,x^+$ (in Case 2) against all \emph{realistic} uncertainties. Moreover, one aims to avoid including unrealistic uncertainties as those render the decisions $x$ and $x^-,x^+$ too conservative. 
Within our setting, it is also possible to add additional information
on the uncertain probability distributions. This leads to additional constraints that can be added to \eqref{Prob: DRO_with_undefined_ambiguity_set} while maintaining algorithmic tractability. 

First, we aim at bounding the \emph{first moment}, i.e. the expectation $\E_{\mass{}}(t)$, of $\mass{}$. The authors in \cite{Parys2015a} and other sources assume perfect knowledge about the first moment, whereas the authors of \cite{Delage2010a} only assume that the first moment is contained in an ellipsoid. In this article, we follow the latter modeling and assume that an estimate of the correct expectation $\EW{}$ and covariance matrix $\Sigma$ is known. Moreover, we assume, that the ellipsoidal uncertainty set containing $\E_{\mass{}}(t)$ is shaped by $\EW{}$, $\Sigma$ and a third parameter $\varepsilon_{\EW{}}>0$, that determines its size. The ellipsoidal uncertainty set is then given by $$\varepsilon_{\EW{}}-(\E_{\mass{}}(t)-\EW{})^\top \Sigma (\E_{\mass{}}(t)-\EW{})\geq 0, \Sigma\succeq 0.$$
In order to reformulate the above constraint by means of an inner product $\langle \cdot , \mass{}\rangle$, we apply Schur's complement and obtain the following equivalent SDP constraint, which fits the setting in \eqref{Prob: DRO_with_undefined_ambiguity_set}:
\begin{equation}\label{Eq: Sec2_first_moment}
	\left\langle \begin{bmatrix}
		\Sigma & t-\EW{}\\
		(t-\EW{})^\top & \varepsilon_{\EW{}}
	\end{bmatrix}, \mass{}\right\rangle \succeq 0.
\end{equation}

Similarly, one may assume that the underlying uncertain probability measure is given by a monomodal density function, see
e.g. \cite{Parys2015a}. Computationally, this assumption has the advantage, that, if $\Pcal$ contains
monomodal distributions with fixed first and second moments, \eqref{Prob: DRO_with_undefined_ambiguity_set} can be reformulated as an SDP. This is one of the main results in \cite{Parys2015a}. However, the corresponding SDP is not easy to incorporate into either \eqref{Prob: Case1} or \eqref{Prob: Case2} as it generally leads to bilinear terms and thereby intractable counterparts for both \eqref{Prob: Case1} and \eqref{Prob: Case2}. In particular, \cite{Rahimian2019a} state, that "with the current state of literature, monomodality cannot be modeled in a tractable manner". To circumvent this obstacle, we exploit the fact that monomodal distributions tend to have a relatively small variance. Thus, similar again to \cite{Delage2010a} in addition to the bounds on the first moment, we impose an upper bound on the \emph{second moment} as follows  
\begin{equation}\label{Eq: Sec2_second_moment}
	\langle -(t-\EW{})(t-\EW{})^\top ,\mass{}\rangle \succeq -\varepsilon_\Sigma\Sigma
\end{equation}
or, equivalently $\text{Var}_{\mass{}}(t)\preceq \varepsilon_\Sigma \Sigma$. Here, $\varepsilon_\Sigma\geq 1$ measures the maximum deviation of the covariance matrix compared to its estimate $\Sigma$.

Finally, we add \emph{confidence set} constraints, see e.g. \cite{Wiesemann2014a}, where we restrict the probability of certain subsets $T_i\subseteq T$, i.e.,
\begin{equation}\label{Eq: Sec2_confidence_sets}
	\langle \sign(\varepsilon_i)\mathbbm{1}_{T_i}^c(t), \mass{} \rangle \ge \varepsilon_i \text{ for every } i\in I.
\end{equation}

Note, that these constraints give us a lot of modeling power as we can model $\mass{}(T_i)\geq \varepsilon_i$ with $\varepsilon_i>0$ and $\mass{}(T_i)\leq -\varepsilon_i$ with $\varepsilon_i<0$. In particular, the normalization constraints \eqref{Constr: DRO_with_undefined_ambiguity_set_norm_1} and \eqref{Constr: DRO_with_undefined_ambiguity_set_norm_2} fall in this framework and will be modeled by setting $T_1=T_2=T$ and $\varepsilon_1 = -1, \varepsilon_2=1$ throughout the remainder of this article.

\subsection{Relation to the Literature}
In the existing literature, distributionally robust constraints are often encoded with the expectation $\E_{\P}(v(x,t))$, which in the present paper encodes the expectation of a nonconvex, in our case a piecewise-constant, function $v$ in $t$ by $\E_{\P}(v(x,t)) = \sum_{i=1}^k x_i \mathbbm{P}(X_i)$. Dropping the convexity assumption poses a stark contrast to the results in \cite{Wiesemann2014a} and \cite{Delage2010a}, where the underlying function $v(x,t)$ has to be both, convex and piecewise-affine in $x$ and $t$, see Condition (C3) in \cite{Wiesemann2014a} and Assumption 2 in \cite{Delage2010a}. However, \cite{Wiesemann2014a} and \cite{Xu2017a} present exceptions to these assumptions for specific cases, namely a very low number $|I|$ of confidence sets, see Observation 1ff in the electronic compendium of \cite{Wiesemann2014a} or even $|I|=0$ (\cite{Xu2017a}). As we consider indicator functions $\mathbbm{1}_{X_i}(t)$, that generally do not satisfy any of those assumptions, we attempt to extend the existing literature to nonconvex functions $v$. Moreover, in contrast to \cite{Dienstbier2023a}, we allow $T$ to be multivariate and consider simple functions $\sum_{i=1}^k x_i\mathbbm{1}_{[x_i^-,x_i^+]}(t)$ instead of either sole indicator functions with $k=1$ or simple functions with the simplifying assumption, that the $m$ entries of $t$ are independent. This increased generality is achieved at the cost of a potentially worse approximation accuracy.

Lastly, we briefly mention the differences of our approach to discrepancy-based DRO models that require an estimator for the true probability distribution $\hat{\rho}$ and restrict $\Pcal$ based on a given metric, e.g. the Wasserstein metric. Here, given an estimated $\hat{\rho}$, these ambiguity sets are formed of all probability distributions, that originate from $\hat{\rho}$ by transferring at most a given probability mass. We refer to the excellent review \cite{Rahimian2019a} for further details.

\section{Distributionally robust constraints dependent on simple functions}\label{Sec: DRO_indicator_functions}

For both, Cases 1 and 2 from Section \ref{Sec: Problem Setting}, we consider the DRO constraint \eqref{Prob: DRO_with_undefined_ambiguity_set} where $\Pcal$ is defined by \eqref{Eq: Sec2_first_moment}, \eqref{Eq: Sec2_second_moment} and \eqref{Eq: Sec2_confidence_sets}.
To this end, let again $b\in \R$, $T\subseteq
\R^m$ be a compact set, and $I\subseteq \N$ denote a finite index set. Next we define the considered ambiguity set. We assume
a 'typical', i.e., nominal distribution with mean $\EW{} \in \R^m$
and covariance matrix $\Sigma \in \R^{m\times m}$ is given, for example from
expert knowledge or by estimation from given data. In formulas, we consider

\begin{subequations}
	\label{Prob: Primal_Purity_Constraint_indicator}
	\begin{align}
		b \leq \min_{\mass{}}~& \langle \sum_{i=1}^k x_i \mathbbm{1}_{X_i}^c,\mass{}\rangle && \label{Constr: Objective_Primal_indicator}\\
		\text{s.t.}~& \mass{} \in \mathcal{M}(T)_{\ge 0} \\
		& \langle \begin{bmatrix}
			\Sigma & t-\mu\\
			(t-\EW{})^\top & \varepsilon_{\EW{}}
		\end{bmatrix}, \mass{}\rangle \succeq 0 \label{Constr: First_Moment_indicator} \\
		&\langle -(t-\EW{})(t-\EW{})^\top ,\mass{}\rangle \succeq -\varepsilon_\Sigma\Sigma  && \label{Constr: Second_Moment_indicator}\\
		&\langle \sign(\varepsilon_i)\mathbbm{1}_{T_i}^c(t), \mass{} \rangle \ge \varepsilon_i && i\in I \label{Constr: Primal_indicator_confidence_set},
	\end{align}
\end{subequations}
where a choice of $T_1=T, \varepsilon_1=-1$ and $T_2=T,
\varepsilon_2=1$ implies that $\mass{}(T)=1$, i.e. $\mass{}$ is a
probability measure on $T$. In the following, we aim at deriving an
algorithmically tractable
reformulation of this set of constraints. We note that in order to
dualize \eqref{Prob: Primal_Purity_Constraint_indicator}, we consider
continuous approximators $x_i \mathbbm{1}_{X_i}^c,
\sign(\varepsilon_i)\mathbbm{1}_{T_i}^c$ of the indicator functions
$x_i \mathbbm{1}_{X_i}, \sign(\varepsilon_i)\mathbbm{1}_{T_i}$. The
existence of approximators that are arbitrarily close to the indicator
functions is given by the seminal Lemma of Urysohn, see e.g. \cite{munkrestopology}. In particular, we choose $\mathbbm{1}^c_{X_i} \geq \mathbbm{1}_{X_i}$, an upper approximator whenever $x_i\geq 0$ and a lower approximator whenever $x_i<0$. The opposite approximators are chosen for $\mathbbm{1}_{T_i}$, i.e., we choose $\mathbbm{1}_{T_i}^c\leq \mathbbm{1}_{T_i}$ if $\varepsilon_i\geq 0$ and $\mathbbm{1}_{T_i}^c\geq \mathbbm{1}_{T_i}$ whenever $\varepsilon_i<0$. This establishes the following key property
\begin{equation}\label{Eq: Urysohn_approx}
	x_i\mathbbm{1}^c_{X_i} \geq x_i\mathbbm{1}_{X_i}\text{ and } \sign(\varepsilon_i)\mathbbm{1}^c_{T_i} \leq \sign(\varepsilon_i)\mathbbm{1}_{T_i}.
\end{equation}

In the following, we will define necessary ingredients for being
able to reformulate such a DRO constraint by dualizing
\eqref{Prob: Primal_Purity_Constraint_indicator}. Subsequently, a tractable and high-quality inner approximation of the resulting
constraint will be obtained.
We first employ duality theory using an adjoint operator:

\begin{remark}\label{Rem: existence_adjoint_operator}
Let $\Scal^r$ denote the set of symmetric $r$ by $r$ matrices. It might not be immediately clear whether an adjoint operator with respect to the primal operator $\mathcal{A} : \Mcal(T) \rightarrow \Scal^{m+1}\times \Scal^m \times \R^I$ of \eqref{Prob: Primal_Purity_Constraint_indicator} exists. 
However, it is constructed in a quite straightforward manner:
First, we observe that for the inner products containing matrices $A\in \Scal^r$, we have
$$\langle \langle A, \mass{} \rangle , Y \rangle_F = \langle \langle A, Y\rangle_F , \mass{} \rangle \text{ for arbitrary }\mass{}\in \Mcal(T),Y\in \Scal^r,$$
where, $\langle \cdot,\cdot\rangle_F: \Scal^r\times \Scal^r\rightarrow \R$ denotes the Frobenius inner product. In particular, for $r\in \{m,m+1\}$, this includes the matrices
$$A\in \left\{\begin{bmatrix}
	\Sigma & t-\EW{}\\
	(t-\EW{})^\top & \varepsilon_{\EW{}}
\end{bmatrix}, -(t-\EW{})(t-\EW{})^\top \right\}.$$

For the inner products containing only the entries $\sign(\varepsilon_i) \mathbbm{1}_{T_i}^c$ of $\mathcal{A}$, we have 
$$\langle \sign(\varepsilon_i) \mathbbm{1}_{T_i}^c, \mass{} \rangle y = \langle \sign(\varepsilon_i) \mathbbm{1}_{T_i}^c y , \mass{} \rangle \text{ for every }\mass{}\in \Mcal(T), y\in \R.$$
Hence, we have constructed an adjoint operator $\mathcal{B}: \Scal^{m+1}\times \Scal^m\times \R^I\rightarrow \Ccal(T)$ to $\mathcal{A}$, which is defined by 
$$\left\langle \begin{bmatrix}
	\Sigma & t-\EW{}\\
	(t-\EW{})^\top & \varepsilon_{\EW{}}
\end{bmatrix}, Y_1\right\rangle + \langle -(t-\EW{})(t-\EW{})^\top, Y_2\rangle + \sum_{i\in I}\sign(\varepsilon_i) \mathbbm{1}_{T_i}^c y_i.$$
Moreover, $\mathcal{B}$ is unique due to Riesz' representation
theorem, see e.g. \cite{Brezis2010a}.
\end{remark}

With this adjoint operator, we derive the following dual program for \eqref{Prob: Primal_Purity_Constraint_indicator}:

\begin{subequations}
	\label{Prob: Dual_Purity_Constraint_indicator}
	\begin{align}
		b \le \max_{y_i,Y_1,Y_2}~& \sum_{i\in I} \varepsilon_i y_i -\varepsilon_\Sigma \langle \Sigma, Y_2 \rangle \label{Obj: Objective_Dual_indicator} \\
		\text{s.t.}~& \sum_{i=1}^k x_i \mathbbm{1}_{X_i}^c - \left\langle \begin{bmatrix}
			\Sigma & t-\EW{}\\
			(t-\EW{})^\top & \varepsilon_{\EW{}}
		\end{bmatrix} , Y_1 \right\rangle -\langle -(t-\EW{})(t-\EW{})^\top , Y_2\rangle \notag \\
		& -\sum_{i\in I} \sign(\varepsilon_i) \mathbbm{1}_{T_i}^c y_i \in \Ccal(T)_{\ge 0} \label{Constr: Dual_indicator}\\
		& Y_1 \in \Scal^{m+1}_{\succeq 0}, Y_2 \in \Scal^m_{\succeq 0}, y \in \R_{\geq 0}^I,
	\end{align}
\end{subequations}
where $\Ccal(T)_{\ge 0}$ denotes the cone of the continuous,
nonnegative functions on $T$.

As usual in reformulation
approaches in robust optimization, we aim to apply strong duality. 
Indeed, next we establish strong duality between \eqref{Prob:
  Primal_Purity_Constraint_indicator} and \eqref{Prob:
  Dual_Purity_Constraint_indicator} that can be seen as a direct
corollary of Corollary 3.0.2 in \cite{Shapiro2000a} or as a direct
consequence of the dualization theory illustrated, e.g. in \cite{Barvinok2002a}.

\begin{theorem}\label{Thm: strong_duality}
	Suppose that $\mass{} \sim \Ncal(\EW{},\Sigma)$ is both, a strictly positive Radon measure and feasible for \eqref{Prob: Primal_Purity_Constraint_indicator}. Then, the duality gap of the problems \eqref{Prob: Primal_Purity_Constraint_indicator} and \eqref{Prob: Dual_Purity_Constraint_indicator} is zero.
\end{theorem}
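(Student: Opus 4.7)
The plan is to cast \eqref{Prob: Primal_Purity_Constraint_indicator} and \eqref{Prob: Dual_Purity_Constraint_indicator} as a primal--dual pair of conic linear programs over the dual pairing $\Mcal(T)\leftrightarrow \Ccal(T)$ supplied by the integration bracket $\langle\cdot,\cdot\rangle$, and then to invoke Corollary 3.0.2 of \cite{Shapiro2000a} verbatim. The linear operator $\mathcal{A}\colon\Mcal(T)\rightarrow \Scal^{m+1}\times \Scal^m\times \R^I$ assembled from the left-hand sides of \eqref{Constr: First_Moment_indicator}--\eqref{Constr: Primal_indicator_confidence_set}, together with its adjoint $\mathcal{B}$ exhibited in Remark \ref{Rem: existence_adjoint_operator}, fits exactly the abstract setting of that theorem: the primal cone is $\Mcal(T)_{\ge 0}$, the image cone on the constraint side is the product $\Scal^{m+1}_{\succeq 0}\times \Scal^m_{\succeq 0}\times \R^I_{\ge 0}$, and Constraint \eqref{Constr: Dual_indicator} is precisely the adjoint cone condition $\sum_i x_i\mathbbm{1}^c_{X_i} - \mathcal{B}(Y_1,Y_2,y)\in \Ccal(T)_{\ge 0}$. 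The linear objective functionals pair as required, so the whole construction fits the conic LP template.

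The only nontrivial task is then to verify the Slater-type constraint qualification required by the theorem, which asks for a primal point in the (generalized) relative interior of the primal cone whose image under $\mathcal{A}$ lies in the interior of the image cone. The hypothesis is tailor-made for this. Substituting $\mass{}\sim\Ncal(\EW{},\Sigma)$ into \eqref{Constr: First_Moment_indicator} exploits the vanishing of the centered first moment and yields the block-diagonal matrix $\mathrm{diag}(\Sigma,\varepsilon_{\EW{}})$, which lies strictly inside $\Scal^{m+1}_{\succeq 0}$ under the standing assumptions $\Sigma\succ 0$ and $\varepsilon_{\EW{}}>0$. Constraint \eqref{Constr: Second_Moment_indicator} evaluates to $-\Sigma\succeq -\varepsilon_\Sigma\Sigma$, \ie~$(\varepsilon_\Sigma-1)\Sigma\succ 0$ whenever $\varepsilon_\Sigma>1$, delivering a strictly interior point in the second SDP cone. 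Feasibility of the Gaussian, together with mild strictness in the choice of the $\varepsilon_i$, provides strict slack in the scalar confidence-set inequalities \eqref{Constr: Primal_indicator_confidence_set}, while strict positivity of the Gaussian density will play the role of an interior point in $\Mcal(T)_{\ge 0}$.

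The main obstacle will be precisely this last identification, since $\Mcal(T)_{\ge 0}$ has empty topological interior in the usual norm topologies on $\Mcal(T)$, so ``Slater'' must be read in the pairing sense rather than the topological one. The clean way to resolve this is to use the version of the constraint qualification in \cite{Shapiro2000a} and \cite{Barvinok2002a} formulated relative to the pairing with $\Ccal(T)$: a Radon measure with everywhere strictly positive continuous density is interior with respect to this pairing, because $\langle f,\mass{}\rangle>0$ for every nonzero $f\in \Ccal(T)_{\ge 0}$. Once the Slater point is identified in this sense, the cited corollary applies directly and yields zero duality gap between \eqref{Prob: Primal_Purity_Constraint_indicator} and \eqref{Prob: Dual_Purity_Constraint_indicator}, as claimed.
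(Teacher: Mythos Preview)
Both you and the paper set up the same conic-LP framework over the $\Mcal(T)\leftrightarrow\Ccal(T)$ pairing and both cite Shapiro's duality theory, but you diverge on \emph{which} constraint qualification you verify, and this matters.

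The paper's proof does not check Slater at all. It observes that $T$ is compact, that the objective and all constraint functions in \eqref{Prob: Primal_Purity_Constraint_indicator} are continuous, and that---after using the isometry $(\Scal^r,\langle\cdot,\cdot\rangle_F)\cong(\R^{r(r+1)/2},\langle\cdot,\cdot\rangle)$---the constraint operator $\mathcal{A}$ lands in a finite-dimensional Euclidean cone $K$. In that setting, Shapiro's corollary for generalized moment problems requires only \emph{consistency}, i.e.\ mere feasibility of the primal, which is precisely what the hypothesis ``$\mass{}\sim\Ncal(\EW{},\Sigma)$ is feasible for \eqref{Prob: Primal_Purity_Constraint_indicator}'' delivers. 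Compactness of $T$ and continuity of the integrands do the work that Slater would otherwise have to do.

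Your route via an interior Slater point is not wrong, but it forces you to import strictness that the theorem does not assume. You need $\varepsilon_\Sigma>1$ to get $(\varepsilon_\Sigma-1)\Sigma\succ 0$, whereas the paper only posits $\varepsilon_\Sigma\ge 1$; and you need ``mild strictness in the choice of the $\varepsilon_i$'' for the confidence-set constraints, whereas the hypothesis only says the Gaussian is feasible, not strictly feasible. So as written you are proving a slightly weaker statement. The paper's compactness-based argument sidesteps all of this: no strict feasibility is needed, and the ``strictly positive Radon measure'' clause in the hypothesis is not used to locate an interior point of $\Mcal(T)_{\ge 0}$ at all.
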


\begin{proof}
	We observe that $\mass{} \sim \Ncal(\EW{},\Sigma)$ is feasible for \eqref{Prob: Primal_Purity_Constraint_indicator}, i.e. \eqref{Prob: Primal_Purity_Constraint_indicator} is "consistent" in the definition of Shapiro. Furthermore, $T$ is compact and the functions in the objective as well as in the constraints of \eqref{Prob: Primal_Purity_Constraint_indicator} are continuous. Due to the isometry of the metric spaces $(\Scal^r, \langle\cdot , \cdot \rangle_F)$ and $(\R^{\frac{r(r-1)}{2}}, \langle \cdot , \cdot \rangle)$, we further reformulate \eqref{Prob: Primal_Purity_Constraint_indicator} as a conic program with $\mathcal{A}\mass{} -b\in K$, where the cone $K\subseteq \R^{(m+1)m/2 + m(m-1)/2+|I|}$.
	Hence, strong duality follows from Corollary 3.1 in \cite{Shapiro2000a}.
\end{proof}

\subsection{Computation of feasible solutions by a discretized robust counterpart}\label{Subsec: discretized_DRO_reformulation_indicator}

In this section, we derive an algorithmically tractable model
for the robust counterpart \eqref{Prob:
Dual_Purity_Constraint_indicator}.
A standard approach to find an approximate solution to this
semiinfinite (SIP) problem 
  is to sample the semiinfinite constraint \eqref{Constr:
    Dual_indicator} and solve the resulting finite-dimensional SDP
  that only contains the sampled constraints. However, a
feasible solution to a finite subsets of the constraints in
\eqref{Constr: Dual_indicator} does not necesarily satisfy
\eqref{Constr: Dual_indicator} itself. 
This means that the obtained solution may not satisfy \eqref{Prob:
  Dual_Purity_Constraint_indicator} and thus by solving Case 1 or 2
with respect to this relaxation of \eqref{Prob:
  Dual_Purity_Constraint_indicator}, we might obtain a solution,
which is not necessarily protected against the uncertainties in the
ambiguity set $\Pcal$, i.e. is not robust and does not necessarily satisfy \eqref{Prob: Primal_Purity_Constraint_indicator}. 

In this work, we however aim for a robust constraint for $\Pcal$ as
for many applications a guaranteed protection is important,
e.g. in medical applications.

To this end, we propose a discretization scheme that
provides an inner approximation of \eqref{Constr:
  Dual_indicator}. This means that every solution of the discretization of \eqref{Prob: Dual_Purity_Constraint_indicator} will
indeed satisfy \eqref{Prob: Dual_Purity_Constraint_indicator} and
thereby guarantee that the corresponding decision variables
$x_i$ for Case 1 and $x_i^-,x_i^+$ for Case 2 are feasible for \eqref{Prob: Primal_Purity_Constraint_indicator}. This robust formulation will make use of Lipschitz continuity of the non-indicator functions in
\eqref{Constr: Dual_indicator}, i.e., the Lipschitz continuity of the polynomial
$$p_Y(t)\coloneqq \left\langle \begin{bmatrix}
	\Sigma & t-\EW{}\\
	(t-\EW{})^\top & \varepsilon_{\EW{}}
\end{bmatrix} , Y_1 \right\rangle +\langle (t-\EW{})(t-\EW{})^\top , Y_2\rangle.$$
In fact, the polynomial $p_Y$ is Lipschitz continuous since $T$ is compact and its coefficients $Y_1,Y_2$ are bounded: 

\begin{lemma}\label{Lemma: Lipschitz_continuity_of_p_indicator}
	Let $T_1=T$ and $\varepsilon_1=-1$. Furthermore, for every $i\in I\setminus\{1\}$ let $\EW{} \in T_i$ if $\varepsilon_i>0$ and $\EW{} \notin T_i$ if $\varepsilon_i<0$. Then, the polynomial $p_Y(t)$ is Lipschitz continuous in $t$ with a uniform Lipschitz constant $L$.
\end{lemma}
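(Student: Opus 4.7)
The polynomial $p_Y(t)$ is quadratic in $t$, so the natural starting point is to decompose $Y_1 \in \Scal^{m+1}$ into its blocks $Y_1^{(11)} \in \Scal^m$, $Y_1^{(12)} \in \R^m$, $Y_1^{(22)} \in \R$ and expand
\[
p_Y(t) = \langle \Sigma, Y_1^{(11)}\rangle_F + 2(t-\EW{})^\top Y_1^{(12)} + \varepsilon_{\EW{}} Y_1^{(22)} + (t-\EW{})^\top Y_2 (t-\EW{}).
\]
Differentiating yields $\nabla_t p_Y(t) = 2 Y_1^{(12)} + 2 Y_2 (t-\EW{})$. Using compactness of $T$ to bound $\|t-\EW{}\| \leq D$ for all $t \in T$, the mean-value inequality then delivers the pointwise estimate $|p_Y(t)-p_Y(t')| \leq (2\|Y_1^{(12)}\|+2D\|Y_2\|)\|t-t'\|$. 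The heart of the lemma is therefore to turn this $Y$-dependent bound into one that is uniform over the dual-feasible set.

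To do so, I plan to exploit dual feasibility \eqref{Constr: Dual_indicator} at carefully chosen test points. At $t=\EW{}$ both the $Y_2$-quadratic part and the $Y_1^{(12)}$-linear part vanish, leaving
\[
\langle \Sigma, Y_1^{(11)}\rangle_F + \varepsilon_{\EW{}} Y_1^{(22)} + \sum_{i \in I} \sign(\varepsilon_i)\mathbbm{1}_{T_i}^c(\EW{})\, y_i \leq \sum_{i=1}^k x_i \mathbbm{1}^c_{X_i}(\EW{}).
\]
The hypotheses $T_1=T$, $\varepsilon_1=-1$, and $\EW{}\in T_i \Leftrightarrow \varepsilon_i>0$ are tailored so that each coefficient $\sign(\varepsilon_i)\mathbbm{1}_{T_i}^c(\EW{})$ for $i\neq 1$ is nonnegative, paired with the nonnegative multiplier $y_i$, while $i=1$ contributes the single summand $-y_1$. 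Together with $\Sigma \succ 0$ and $Y_1 \succeq 0$, this absorbs the $y_i$-terms and yields explicit, data-dependent upper bounds on $Y_1^{(22)}$ and on $\Tr(Y_1^{(11)})$; the Schur-complement inequality $\|Y_1^{(12)}\|^2 \leq Y_1^{(22)}\,\lambda_{\max}(Y_1^{(11)})$ implied by $Y_1 \succeq 0$ then transfers these into a bound on $\|Y_1^{(12)}\|$.

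A uniform bound on $\|Y_2\|$ is obtained by the analogous device: evaluate \eqref{Constr: Dual_indicator} at $t = \EW{} + r e_j$ along an orthonormal basis $\{e_j\}_{j=1}^m$ with $r>0$ small enough to keep the points inside $T$. Each such evaluation isolates the diagonal entry $r^2 (Y_2)_{jj}$, while the already-bounded $Y_1$-contribution and the sign-controlled $y_i$-contribution are absorbed into a data-dependent constant. The diagonal bounds, combined with $Y_2 \succeq 0$, control the spectral norm of $Y_2$, and plugging back into the gradient estimate produces the claimed uniform constant $L$ depending only on $T,\EW{},\Sigma,\varepsilon_{\EW{}},x$, and $\{(T_i,\varepsilon_i)\}_{i \in I}$. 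The trickiest step will be this last one: away from $\EW{}$ the linear $Y_1^{(12)}$-term no longer vanishes, so the argument must be ordered---$Y_1$-bounds first, then $Y_2$---in order to avoid a circular coupling between the two blocks.
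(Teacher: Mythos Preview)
Your gradient computation and the reduction to uniform bounds on $\|Y_1^{(12)}\|$ and $\|Y_2\|$ are fine, and evaluating the semiinfinite constraint at $t=\EW{}$ is the right first move. However, the plan has two genuine gaps, both stemming from the fact that you never invoke the objective constraint \eqref{Obj: Objective_Dual_indicator}.

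\emph{Gap in Step~3 (sign of the $Y_2$ term).} In \eqref{Constr: Dual_indicator} the quadratic term enters as $-\langle -(t-\EW{})(t-\EW{})^\top,Y_2\rangle = +(t-\EW{})^\top Y_2(t-\EW{})$, i.e.\ with a \emph{positive} sign. Evaluating at $t=\EW{}+re_j$ therefore yields an inequality of the form $r^2(Y_2)_{jj}\geq -C$, which is vacuous for $Y_2\succeq 0$ and gives no upper bound on $(Y_2)_{jj}$. Indeed, over the constraint set of \eqref{Prob: Dual_Purity_Constraint_indicator} alone, $Y_2$ is unbounded: scaling $Y_2\to\lambda Y_2$ only makes \eqref{Constr: Dual_indicator} easier. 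The paper obtains the needed upper bound from the \emph{objective} relation $b\leq \sum_i\varepsilon_iy_i-\varepsilon_\Sigma\langle\Sigma,Y_2\rangle$, which directly gives $\langle\Sigma,Y_2\rangle\leq(\sum_i\varepsilon_iy_i-b)/\varepsilon_\Sigma$; the right-hand side is then controlled via \eqref{Constr: Dual_indicator} at $t=\EW{}$ and a small LP duality argument.

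\emph{Gap in Step~2 (the unbounded multiplier $y_1$).} Your displayed inequality at $t=\EW{}$ is correct, but after dropping the nonnegative $i\neq 1$ terms you are left with
\[
\langle\Sigma,Y_1^{(11)}\rangle_F+\varepsilon_{\EW{}}Y_1^{(22)}\ \leq\ \sum_{i=1}^kx_i\mathbbm{1}_{X_i}^c(\EW{})\ +\ y_1,
\]
and $y_1\in\R_{\geq 0}$ is not a priori bounded. So the $y_i$-terms are \emph{not} absorbed by the sign hypotheses alone. The paper again uses \eqref{Obj: Objective_Dual_indicator} (together with $\langle\Sigma,Y_2\rangle\geq 0$) to obtain the linear constraint $\sum_i\varepsilon_iy_i\geq b$, and then shows via LP duality that this forces $\sum_i\sign(\varepsilon_i)\mathbbm{1}_{T_i}^c(\EW{})\,y_i\geq b$, which is exactly what is needed to cap the right-hand side. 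Once you incorporate \eqref{Obj: Objective_Dual_indicator} into both steps, your block-decomposition route and the paper's trace/Rayleigh--Ritz route become essentially equivalent.
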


\begin{proof}
	Due to the compactness of $T$, it suffices to show that for every feasible solution of \eqref{Prob: Dual_Purity_Constraint_indicator} the entries $Y_1,Y_2$ are bounded. To this end, let $\varepsilon_1=-1, \varepsilon_2=1$. In addition, we assume w.l.o.g. $\varepsilon_i >0$ for every $i\in I \setminus\{1\}$. This is due to the fact that every constraint
	$$	\langle \sign(\varepsilon_i)\mathbbm{1}_{T_i}^c,\P \rangle \geq \varepsilon_i \text{ with } -1 \leq \varepsilon_i < 0$$
	can equivalently be expressed by
	$$\langle \mathbbm{1}_{T_i^C}^c,\P \rangle \geq 1+\varepsilon_i.$$
	In order to prove this equivalence, we note that $\sign(\varepsilon_i)=-1$, add $1$ on both sides and consider the complement $T_i^C$ of $T_i$. 
	
	Now, we first prove that $\Tr(Y_1)<\infty$:
	Let $t=\EW{}$ and $v_i$ being the eigenvectors and $\lambda_i$ the eigenvalues of $Y_1$ then \eqref{Constr: Dual_indicator} implies:
	\begin{equation}\label{Eq: help3}
		\begin{aligned}
			\lambda_{\min}\left(\begin{bmatrix}
				\Sigma & 0 \\ 0 & \varepsilon_{\EW{}}
			\end{bmatrix}\right) \Tr(Y_1) & = \sum_{i=1}^{m+1} \lambda_i  \lambda_{\min}\left(\begin{bmatrix}
				\Sigma & 0 \\ 0 & \varepsilon_{\EW{}}
			\end{bmatrix}\right) \overset{*}{\leq} \sum_{i=1}^{m+1} \lambda_i v_i^\top \begin{bmatrix}
				\Sigma & 0 \\ 0 & \varepsilon_{\EW{}}
			\end{bmatrix} v_i\\
			& \leq \left\langle \begin{bmatrix}
				\Sigma & 0 \\ 0 & \varepsilon_{\EW{}}
			\end{bmatrix}, Y_1 \right\rangle \overset{{\eqref{Constr: Dual_indicator}}}{\leq} \sum_{i=1}^k x_i \mathbbm{1}_{X_i}^c(\EW{}) - \sum_{i\in I} \sign(\varepsilon_i) y_i,
		\end{aligned}
	\end{equation}
	where (*) holds due to the Rayleigh-Ritz principle, see e.g. \cite{Brezis2010a} for further details. We show that \eqref{Eq: help3} is bounded from above for every feasible solution to \eqref{Prob: Dual_Purity_Constraint_indicator} by considering the following LP:
	\begin{equation}\label{Eq: help4}
		\min_{y\in \R^I_{\geq 0}} \sum_{i\in I} \sign(\varepsilon_i) \mathbbm{1}_{T_i}^c(\EW{})y_i:\ \sum_{i\in I}\varepsilon_i y_i \geq b, 
	\end{equation}
	whose constraint can be derived from \eqref{Obj: Objective_Dual_indicator} and the fact that both $\Sigma$ and $Y_2$ are positive semidefinite. Moreover, this is equivalent to
	\begin{equation*}
		\min_{y\in \R^I_{\geq 0}} -y_1+\sum_{i\in I \setminus \{1\}} y_i:\ \sum_{i\in I}\varepsilon_i y_i \geq b.
	\end{equation*}
	due to $\EW{}\in T_i$ for every $i\in I$. Furthermore, it is bounded from below by $0$ since its dual LP:
	\begin{align*}
		\max_{z\geq 0} b z : - z &\leq -1,\\ 
		\varepsilon_i z &\leq 1 &&\text{ for every } i\in I\setminus\{1\},
	\end{align*}
	is feasible for $z=1$ since w.l.o.g. $|\varepsilon_i|\leq 1$. Consequently, this provides a lower bound of $b$ to \eqref{Eq: help4} and thereby an upper bound to $\text{Tr}(Y_1)$ via \eqref{Eq: help3}.
	
	Let $\lambda_{\min}(\Sigma)> 0$ denote the minimal eigenvalue of $\Sigma$ and $\lambda_i$ the eigenvalues of $Y_2$ with respect to eigenvector $v_i$. Then, on the one hand, we have
	\begin{equation}\label{Eq: help1}
		\begin{aligned}
			\varepsilon_\Sigma \lambda_{\min}(\Sigma) \Tr(Y_2) & = \varepsilon_\Sigma \sum_{i=1}^m \lambda_i \lambda_{\min}(\Sigma) \overset{(*)}{\leq} \varepsilon_\Sigma \sum_{i=1}^m \lambda_i v_i^\top \Sigma v_i = \varepsilon_\Sigma \left\langle \Sigma , \sum_{i=1}^m \lambda_i v_iv_i^\top \right\rangle\\
			& = \varepsilon_\Sigma \langle \Sigma , Y_2 \rangle \overset{\eqref{Obj: Objective_Dual_indicator}}{\leq} \sum_{i\in I} \varepsilon_i y_i
		\end{aligned}
	\end{equation}	
	
	where (*) holds because of the Rayleigh-Ritz principle. In order to show that \eqref{Eq: help1} is bounded, we show that the following linear program is bounded from above:
	\begin{equation}\label{Eq: LP_help}
		\max_{y\in \R^I_{\geq 0}} \varepsilon^\top y:\  \tau^\top y \leq \sum_{i=1}^k x_i \mathbbm{1}_{X_i}^c(\EW{}), 
	\end{equation}
	where $\tau_i = \sign(\varepsilon_i) \mathbbm{1}_{T_i}(\EW{}).$  Note that $\tau\neq 0$ due to $\EW{} \in T_2$. Similar as before, the constraint in \eqref{Eq: LP_help} can be derived from \eqref{Constr: Dual_indicator} with $t=\EW{}$ in the following way:
	\begin{equation}\label{Eq: help2}
		\begin{aligned}
			\sum_{i=1}^k x_i \mathbbm{1}_{X_i}^c(\EW{}) & \geq \sum_{i=1}^k x_i \mathbbm{1}_{X_i}^c(\EW{}) - \langle \begin{bmatrix}
				\Sigma & 0 \\ 0 & \varepsilon_{\EW{}}
			\end{bmatrix}, Y_1 \rangle \geq \sum_{i\in I} \sign(\varepsilon_i) \mathbbm{1}_{T_i}^c(\EW{})y_i
		\end{aligned}
	\end{equation}
	Then, weak duality implies
	\begin{equation}
		\eqref{Eq: LP_help} \leq \min_{z\in \R_{\geq 0}} z \sum_{i=1}^k x_i \mathbbm{1}_{X_i}^c(\EW{}):\ z \tau -\varepsilon \geq 0. 
	\end{equation}
	Observe that $z=1$ is a feasible solution since
	$$\tau_i=\sign(\varepsilon_i)\mathbbm{1}_{T_i}^c(\EW{})=1>\varepsilon_i$$
	for every $i\in I\setminus\{1\}$ and $\tau_1=-1=\varepsilon_1$. Thus, we obtain an upper bound for \eqref{Eq: LP_help} and thereby for $\text{Tr}(Y_2)$. Finally, we proved that the coefficients of $p_Y(t)$ are bounded and the claim follows. 
\end{proof}

Observe, that the assumptions on the confidence sets $T_i$, i.e., that either it is
$\EW{} \in T_i$ whenever $\varepsilon_i>0$ or $\EW{} \notin T_i$ if
$\varepsilon_i<0$, limits the power of modeling ambiguity sets $\Pcal$. Indeed, we our model does not include upper bounds on $\P(T_i)$ if $\mu \in T_i$ and lower bounds if $\mu \notin T_i$. We note, that this limitation is rather mild as most real-world distributions are concentrated around their respective expectation to some degree. Consequently, since the requirement above still allows us to force the probability mass of $\mass{}\in \Pcal$ towards the estimated expected value $\EW{}$, it seems not very restrictive in practice. In fact, discrepancy based approaches such as Wasserstein balls yield a similar structure.

If confidence sets are used,
restrictions in modeling are fairly common, also for example in the so-called nesting condition in \cite{Wiesemann2014a} and the references therein. In addition, there are relevant settings where the assumption from the
above lemma can be
weakened. Indeed, in \cite{Dienstbier2023a} it is shown that for one-dimensional $T$, no such assumption is needed at all. 

In the following Lemma, we establish an inner approximation of the DRO constraint \eqref{Constr: Dual_indicator}. To this end, we denote by $T_N=\delta_N \mathbbm{Z}^m\cap T$ the standard lattice with stepsize $\delta_N \in \R_{>0}$, that serves as a discretization of $T$. Moreover, we define a \emph{level set} $L_h$ by 
$$L_h\coloneqq \left\{t\in T:\ \sum_{i=1}^k x_i \mathbbm{1}_{X_i}(t)-\sum_{i\in I} \sign(\varepsilon_i) \mathbbm{1}_{T_i}(t) =h\right\},$$
where $h$ denotes the \emph{height} of the specific level set. The motivation to consider these level sets is, that on the boundaries of $L_h$ the indicator functions $\mathbbm{1}_{X_i},\mathbbm{1}_{T_i}$ abruptly change and any potential Lipschitz constant $L$ for the continuous approximations $\mathbbm{1}_{X_i}^c,\mathbbm{1}_{T_i}^c$ of $\mathbbm{1}_{X_i},\mathbbm{1}_{T_i}$ tends to infinity, the closer the continuous approximation is. Consequently, an approximation of the left-hand side of \eqref{Constr: Dual_indicator} solely based on Lipschitz continuity may become quite poor.

Thus, we address the indicator functions separately. To this end, let us first denote 
\begin{align*}
	f^{c}(t)\coloneqq & \sum_{i=1}^k x_i \mathbbm{1}_{X_i}^{c}(t) - \left\langle \begin{bmatrix}
		\Sigma & t-\EW{}\\
		(t-\EW{})^\top & \varepsilon_{\EW{}}
	\end{bmatrix} , Y_1 \right\rangle +\langle (t-\EW{})(t-\EW{})^\top , Y_2\rangle\\ 
	&\quad  -\sum_{i\in I} \sign(\varepsilon_i) \mathbbm{1}_{T_i}^{c}(t) y_i
\end{align*}
for fixed $Y_1 \in \Scal^{m+1}_{\succeq 0}, Y_2 \in \Scal^m_{\succeq 0}, y \in \R_{\geq 0}^I$ and observe the equivalence
$$\eqref{Constr: Dual_indicator} \Leftrightarrow f^c(t)\geq 0 \text{ for every } t \in T.$$ 
Let us further observe, that in most applications, we can assume that $X_i\cap T_N\neq \emptyset$ and $T_i\cap T_N\neq \emptyset$, whenever $\delta_N$ is sufficiently small, e.g. if every $X_i$ and $T_i$ contains open sets. In particular, we assume that $\delta_N$ is chosen small enough, such that for every $t\in L_h$, we have that there is a $\bar{t}\in T_N\cap L_h$ with $\|t-\bar{t}\|\leq \sqrt{m}\delta_N$. Since $T_N = \delta_N \Z^m\cap T$, this guarantees that for every $t\in L_h$, there is a nearby sample point also contained in $L_h$.  Consequently, as seen in Lemma \ref{Lemma: Lipschitz_continuity_of_p_indicator}, we can address the differences on $f^c$ evaluated on sample points $\bar{t}\in T_N$ compared to the nearby non-sample points $t\in T\setminus T_N$ by exploiting Lipschitz continuity on the polynomial part $p$ of $f^c$.  Finally, we observe that the union of all these level sets $\bigcup_{h} L_h=T$ is a finite, disjoint decomposition of $T$ and thus, we have addressed all potential deviations of $f^c$ between values on $T\setminus T_N$ and $T_N$. To make these arguments precise:

\begin{lemma}\label{Lemma:inner_approx}
	Let $L>0$ be the Lipschitz constant of $p_Y$. Let further $\delta_N$ be sufficiently small, such that for every $t\in T$ with w.l.o.g. $t\in L_h$, there exists a $\bar{t}\in T_N\cap L_h$ with $\|t-\bar{t}\|\leq \delta_N\sqrt{m}$. Then, the finitely many constraints \begin{equation}\label{Eq: Dual_indicator_discretized}
		f(\bar{t})-L\delta_N\sqrt{m} \geq 0 \text{ for every } \bar{t}\in T_N
	\end{equation}
imply the semiinfinite constraint
$$f^c(t) \geq 0 \text{ for every } t\in T.$$
\end{lemma}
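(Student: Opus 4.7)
My plan is to fix an arbitrary $t \in T$ and compare $f^c(t)$ against $f(\bar{t})$ for a well-chosen grid point $\bar{t} \in T_N$. Since $t$ lies in some level set $L_h$, the hypothesis on $\delta_N$ furnishes $\bar{t}\in T_N \cap L_h$ with $\|t-\bar{t}\|\le \delta_N\sqrt{m}$. The basic identity I will exploit is
\begin{equation*}
  f^c(t) \;=\; \bigl(f^c(t)-f(\bar{t})\bigr) \;+\; f(\bar{t}),
\end{equation*}
so using the assumed inequality $f(\bar{t})\ge L\delta_N\sqrt{m}$, it suffices to show $f^c(t)-f(\bar{t}) \ge -L\delta_N\sqrt{m}$.

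I would split the difference $f^c(t)-f(\bar{t})$ into three contributions: the $x_i$-indicator part, the polynomial part $p_Y$, and the $y_i$-indicator part. The polynomial part is dealt with directly by Lemma \ref{Lemma: Lipschitz_continuity_of_p_indicator}: since $p_Y$ is Lipschitz with constant $L$ and $\|t-\bar{t}\|\le \delta_N\sqrt{m}$, we get $|p_Y(t)-p_Y(\bar{t})|\le L\delta_N\sqrt{m}$, which is exactly the slack the hypothesis gives us. The two indicator parts are then handled by combining the level-set property with the Urysohn inequalities in \eqref{Eq: Urysohn_approx}. Because $t$ and $\bar{t}$ lie in the same $L_h$, the discontinuous pieces $\sum_i x_i\mathbbm{1}_{X_i}$ and $\sum_i \sign(\varepsilon_i)\mathbbm{1}_{T_i}$ take the same values at $t$ and $\bar{t}$, so I can replace each $\mathbbm{1}_{X_i}(\bar{t})$ and $\mathbbm{1}_{T_i}(\bar{t})$ by $\mathbbm{1}_{X_i}(t)$ and $\mathbbm{1}_{T_i}(t)$ respectively. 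Urysohn then yields $\sum_i x_i(\mathbbm{1}_{X_i}^c(t)-\mathbbm{1}_{X_i}(t))\ge 0$ and, using $y_i\ge 0$, also $-\sum_{i\in I}\sign(\varepsilon_i)y_i(\mathbbm{1}_{T_i}^c(t)-\mathbbm{1}_{T_i}(t))\ge 0$.

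Putting the three bounds together gives
\begin{equation*}
  f^c(t)-f(\bar{t}) \;\ge\; -\bigl(p_Y(t)-p_Y(\bar{t})\bigr) \;\ge\; -L\delta_N\sqrt{m},
\end{equation*}
and combined with $f(\bar{t})\ge L\delta_N\sqrt{m}$ this proves $f^c(t)\ge 0$. Since $t\in T$ was arbitrary, the semiinfinite constraint $f^c(t)\ge 0$ for all $t\in T$ follows.

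The delicate point — and what I expect to be the main obstacle to write out cleanly — is the step where I replace the indicator values at $\bar{t}$ by those at $t$. One has to read the definition of $L_h$ as ensuring that the full piecewise-constant part of $f$ (both the $x_i$-weighted and the $y_i$-weighted indicators) agrees at the two points; here the assumption on $\delta_N$ is crucial because it guarantees that the nearby lattice point can be chosen inside the same level set rather than on the other side of an indicator jump, where Lipschitz bounds would be useless. Once this invariance is justified, the remaining manipulation is a clean two-line combination of Lipschitz continuity with the directional Urysohn inequalities.
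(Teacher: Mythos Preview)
Your proposal is correct and follows essentially the same route as the paper: pick $\bar t\in T_N\cap L_h$ close to $t$, use Lipschitz continuity of $p_Y$ for the polynomial part, and use the Urysohn inequalities \eqref{Eq: Urysohn_approx} together with the level-set invariance for the indicator parts, then combine with the discretized hypothesis $f(\bar t)\ge L\delta_N\sqrt m$. Your closing remark about having to read $L_h$ as a region on which \emph{all} the individual indicators (and hence also the $y_i$-weighted sum) are constant is exactly the right reading and matches the paper's intended use of the level sets.
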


\begin{proof}
	We first suppose w.l.o.g. that $t\in L_h$. Then, there exists a $\bar{t}\in L_h$ such that $\|t-\bar{t}\|\leq \delta_N\sqrt{m}$ and hence 
	\begin{align*}
		f^c(t)+L\delta_N\sqrt{m} & \geq f^c(t)+L\|t-\bar{t}\|  \overset{(1)}{\geq} f^c(t) +|p_Y(\bar{t})-p_Y(t)|\\
		& \overset{\eqref{Obj: Objective_Dual_indicator}}{\geq} \sum_{i=1}^k x_i \mathbbm{1}_{X_i}^c(t) - \sum_{i\in I} \sign(\varepsilon_i) \mathbbm{1}_{T_i}^c(t) + p_Y(\bar{t}) \\
		& \overset{(2)}{\geq} \sum_{i=1}^k x_i \mathbbm{1}_{X_i}(t) - \sum_{i\in I} \sign(\varepsilon_i) \mathbbm{1}_{T_i}(t) + p_Y(\bar{t}) = f(\bar{t})\,
	\end{align*}
	where (1) holds due to definition of $L$ and (2) holds due to \eqref{Eq: Urysohn_approx}.
\end{proof}

Note, that Lemma \ref{Lemma:inner_approx} provides a sufficient criterion for the SIP constraint \eqref{Constr: Dual_indicator}. Thus, replacing \eqref{Constr: Dual_indicator} by \eqref{Eq: Dual_indicator_discretized} gives an inner approximation of \eqref{Prob: Dual_Purity_Constraint_indicator}. Therefore, the existence of $y,Y_1,Y_2$ satisfying \eqref{Eq: Dual_indicator_discretized} in addition to the remaining constraints of \eqref{Prob: Dual_Purity_Constraint_indicator} guarantees that the DRO constraint \eqref{Prob: Primal_Purity_Constraint_indicator} is satisfied.

\subsection{Tractable approximations for DRO}
We note that \eqref{Prob: Primal_Purity_Constraint_indicator} is
often considered as the (nonconvex) DRO constraint embedded in an otherwise convex program, e.g. as illustrated by Case 1 and 2 in Section \ref{Sec: Problem Setting}. Hence, instead of considering constant $x_i,X_i$, we investigate in the following
paragraphs how the Lemma \ref{Lemma:inner_approx} approximation can be
applied to Case 1, i.e. decision variables $x_i$ and Case 2, with
decision variables $x_i^-,x_i^+$ that define the box
$X_i=[x_i^-,x_i^+]$. For the sake of simplicity, we assume that the
objective of DRO is linear. However, the results below hold analogously for maximizing concave objective functions as well. For Case 1 let $x\in C \subseteq \R^n$ be a decision variable. We recall that $n=k$ and consider:
\begin{subequations}\label{Prob: primal_linear_x_i_Case1}
	\begin{align}
		\max_{x, Y_1,Y_2,y} & c^\top x \\
			\text{ s.t.}~ & \sum_{i\in I} \varepsilon_i y_i -\varepsilon_\Sigma \langle \Sigma, Y_2 \rangle \geq b \label{Constr: primal_linear_x_i_objective_constraint1} \\
			& \sum_{i=1}^k x_i \mathbbm{1}_{X_i}^c(t) - \left\langle \begin{bmatrix}
				\Sigma & t-\EW{}\\
				(t-\EW{})^\top & \varepsilon_1
			\end{bmatrix} , Y_1 \right\rangle  \notag \\
			& +\left\langle (t-\EW{})(t-\EW{})^\top , Y_2\right\rangle -\sum_{i\in I} \sign(\varepsilon_i) \mathbbm{1}_{T_i}^c(t) y_i\geq 0 \qquad \forall t\in T \label{Constr: primal_linear_x_i_objective_constraint2}\\
			& x\in C, Y_1 \in \Scal^{m+1}_{\succeq 0}, Y_2 \in \Scal^m_{\succeq 0}, y \in \R_{\geq 0}^I.
	\end{align}
\end{subequations}

It turns out that computing lower bounds for \eqref{Prob: primal_linear_x_i_Case1} is tractable:
 
\begin{theorem}\label{Thm: SDP_for_indicator_with_varying_x_i}
	A solution to the following semidefinite problem yields a feasible solution to the semiinfinite problem \eqref{Prob: primal_linear_x_i_Case1}.
	\begin{subequations} \label{Prob: discretized_indicator_linear_obj}
		\begin{align}
			\max_{x,Y_1,Y_2,y}\; & c^\top x\\
			 \text{ s.t.}~ & \sum_{i\in I} \varepsilon_i y_i -\varepsilon_\Sigma \langle \Sigma, Y_2 \rangle \geq b  \\
			& \sum_{i=1}^k x_i \mathbbm{1}_{X_i}(\bar{t}) - \left\langle \begin{bmatrix}
				\Sigma & \bar{t}-\EW{}\\
				(\bar{t}-\EW{})^\top & \varepsilon_1
			\end{bmatrix} , Y_1 \right\rangle +\langle (\bar{t}-\EW{})(\bar{t}-\EW{})^\top , Y_2\rangle \notag \\
			& -\sum_{i\in I} \sign(\varepsilon_i) \mathbbm{1}_{T_i}(\bar{t}) y_i - L\delta_N\sqrt{m}\geq 0 \qquad\qquad\qquad\qquad\qquad \forall \bar{t}\in T_N \label{Constr: discretized_purity_indicator_linear_obj}\\
			& x\in C, Y_1 \in \Scal^{m+1}_{\succeq 0}, Y_2 \in \Scal^m_{\succeq 0}, y \in \R_{\geq 0}^I.
		\end{align}
	\end{subequations}
\end{theorem}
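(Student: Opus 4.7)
The proof plan is a direct invocation of Lemma \ref{Lemma:inner_approx} applied to each feasible solution of the SDP. Fix any feasible $(x^*, Y_1^*, Y_2^*, y^*)$ of \eqref{Prob: discretized_indicator_linear_obj}. The two programs share the objective $c^\top x$, the scalar constraint $\sum_{i\in I} \varepsilon_i y_i - \varepsilon_\Sigma \langle \Sigma, Y_2\rangle \geq b$, the cone memberships $Y_1 \in \Scal^{m+1}_{\succeq 0}$, $Y_2 \in \Scal^m_{\succeq 0}$, $y \in \R^I_{\geq 0}$, and the decision-variable constraint $x \in C$, so these carry over verbatim. Only the semi-infinite positivity condition \eqref{Constr: primal_linear_x_i_objective_constraint2} on $f^c(t)$ over $t \in T$ has to be derived.

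For the fixed dual variables, define $f$ and $f^c$ as in Section \ref{Subsec: discretized_DRO_reformulation_indicator}. Then \eqref{Constr: discretized_purity_indicator_linear_obj} reads precisely as $f(\bar{t}) - L\delta_N \sqrt{m} \geq 0$ for all $\bar{t} \in T_N$, which is exactly the hypothesis of Lemma \ref{Lemma:inner_approx}, and whose conclusion $f^c(t) \geq 0$ for every $t \in T$ is exactly \eqref{Constr: primal_linear_x_i_objective_constraint2}. Two side conditions must be verified: first, that $L$ is a valid Lipschitz constant for the polynomial $p_Y$ with $(Y_1, Y_2) = (Y_1^*, Y_2^*)$; and second, that $\delta_N$ is small enough for the level-set-sampling hypothesis to hold. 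The first is supplied uniformly over the feasible set by Lemma \ref{Lemma: Lipschitz_continuity_of_p_indicator}, provided the expression $\sum_i x_i \mathbbm{1}_{X_i}^c(\EW{})$ remains bounded across the $x \in C$ realized by feasible solutions — a mild boundedness assumption on $C$ that one propagates into the bound on $\Tr(Y_1), \Tr(Y_2)$. The second is an assumption built into the construction of $T_N = \delta_N \Z^m \cap T$.

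The main obstacle, already absorbed into Lemma \ref{Lemma:inner_approx}, is that \eqref{Constr: discretized_purity_indicator_linear_obj} uses the true indicators $\mathbbm{1}_{X_i}, \mathbbm{1}_{T_i}$ while \eqref{Constr: primal_linear_x_i_objective_constraint2} features the continuous Urysohn approximators $\mathbbm{1}_{X_i}^c, \mathbbm{1}_{T_i}^c$. Safety of the replacement rests on the sign conventions encoded in \eqref{Eq: Urysohn_approx}: with $x_i \mathbbm{1}_{X_i}^c(t) \geq x_i \mathbbm{1}_{X_i}(t)$ and, since $y_i \geq 0$, $\sign(\varepsilon_i) y_i \mathbbm{1}_{T_i}^c(t) \leq \sign(\varepsilon_i) y_i \mathbbm{1}_{T_i}(t)$, one obtains $f^c(t) \geq f(t)$ pointwise; the Lipschitz slack $L\delta_N \sqrt{m}$ then absorbs the residual variation of the polynomial part $p_Y$ between a sample $\bar{t}$ and a generic $t$ lying in the same level set $L_h$. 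Chaining these two observations yields feasibility for \eqref{Prob: primal_linear_x_i_Case1} and hence the theorem.
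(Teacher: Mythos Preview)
Your proposal is correct and follows essentially the same approach as the paper: both arguments reduce the theorem to a direct application of Lemma~\ref{Lemma:inner_approx}, noting that all other constraints carry over verbatim and that \eqref{Constr: discretized_purity_indicator_linear_obj} is precisely the hypothesis \eqref{Eq: Dual_indicator_discretized} of that lemma. Your write-up is considerably more detailed than the paper's two-line proof---in particular, your explicit discussion of the two side conditions (uniformity of $L$ over $x\in C$ and the level-set sampling assumption on $\delta_N$) and of the role of the sign conventions \eqref{Eq: Urysohn_approx} makes the argument more self-contained, but the underlying logic is identical.
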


\begin{proof}
	Given an arbitrary $x\in C$. Due to Lemma \ref{Lemma:inner_approx}, we observe that Constraint \eqref{Constr: discretized_purity_indicator_linear_obj} implies $f^c(t)\geq 0$ for every $t\in T$, i.e. \eqref{Constr: primal_linear_x_i_objective_constraint2}. Hence, the claim follows.
\end{proof}

We note that $\sum_{i=1}^k x_i \mathbbm{1}_{X_i}$
is linear and thus convex in the $x_i$. Thus, if the number
of confidence sets $|I|$ is low, Problem \eqref{Prob:
  discretized_indicator_linear_obj} satisfies the (weakened)
conditions needed for Theorem 1 in \cite{Wiesemann2014a} and can be
exactly reformulated as a convex program by applying their methods,
whereas the proposed method in this paper only provides a lower bound
on \eqref{Prob: primal_linear_x_i_Case1}. However, our approach can also be
used for a large number of confidence sets. In addition, it does not
depend on convexity and can also be used in nonconvex settings. This can be seen by the following result for Case 2, where $T=[0,M]^m$ and $X_i=[x_i^-,x_i^+]$ are supposed to be $k=2n$ hypercubes: 
\begin{subequations}\label{Prob: primal_linear_X_i}
	\begin{align}
		\max & \sum_{i=1}^k (c^-_i)^\top x_i^- + (c^+_i)^\top x_i^+\\
		\st~ & \sum_{i\in I} \varepsilon_i y_i -\varepsilon_\Sigma \langle \Sigma, Y_2 \rangle \geq b \label{Constr: primal_linear_X_i_objective_constraint1} \\
		& \sum_{i=1}^k x_i \mathbbm{1}_{[x_i^-,x_i^+]}^c(t) - \left\langle \begin{bmatrix}
			\Sigma & t-\EW{}\\
			(t-\EW{})^\top & \varepsilon_1
		\end{bmatrix} , Y_1 \right\rangle \notag \\
		& \qquad +\left\langle (t-\EW{})(t-\EW{})^\top , Y_2\right\rangle -\sum_{i\in I} \sign(\varepsilon_i) \mathbbm{1}_{T_i}^c(t) y_i\geq 0 && \forall t\in T \label{Constr: primal_linear_X_i_objective_constraint2}\\
		& x_i^-,x_i^+\in C, Y_1 \in \Scal^{m+1}_{\succeq 0}, Y_2 \in \Scal^m_{\succeq 0}, y \in \R_{\geq 0}^I.
	\end{align}
\end{subequations}
We note, that $\sum_{i=1}^k x_i \mathbbm{1}_{[x_i^-,x_i^+]}^c$ is nonconvex in the variables $x_i^-,x_i^+\in \R^m$.
In the following theorem, we model the indicator function
$\mathbbm{1}_{[x_i^-,x_i^+]}^c:T_N\rightarrow \R$ by binary variables
$\tilde{b}_{\bar{t}}^i$. Additionally, we ensure, that these variables properly model $\mathbbm{1}_{[x_i^-,x_i^+]}^c(\bar{t})$ by tracking the "jumps" from $0$ to $1$
at $x_{ij}^-$ in direction $j\in [m]$ by additional binary variables
$\Delta_{\bar{t}}^{-,i,j}$ and the "jumps" form $1$ to $0$ at $x_{ij}^+$ in direction $j\in [m]$ by
$\Delta_{\bar{t}}^{+,i,j}$ respectively. For univariate simple functions a modeling approach along these lines was given in \cite{Dienstbier2020a} for an engineering application in
the design of particulate products.

\begin{theorem}\label{Thm: MIP_multidim}
	Let $M_\delta\coloneqq \{0,\delta_N,\ldots, M\}$ the discretization of $[0,M]$, $T_0^j=\{\bar{t}\in T_N: \bar{t}_j=0\}\subseteq T_N$ a set of boundary points of $T_N = \delta_N\Z^m\cap [0,M]^m$. Then, a solution to the following MISDP yields a feasible solution to \eqref{Prob: primal_linear_X_i}.

	\begin{subequations}\label{Prob: Dual_Purity_Constraint_indicator_discretized}
		\begin{align}
			\max &\sum_{i=1}^k (c^-_i)^\top x_i^- + (c^+_i)^\top x_i^+\\
			\st~& \sum_{i\in I} \varepsilon_i y_i -\varepsilon_\Sigma \langle \Sigma, Y_2 \rangle \geq b \label{Constr: discretized_dual_objective_indicator_geq0} \\
			& \sum_{i=1}^k x_i\tilde{b}_{\bar{t}}^i - \left\langle \begin{bmatrix}
				\Sigma & \bar{t}-\EW{}\\
				(\bar{t}-\EW{})^\top & \varepsilon_1
			\end{bmatrix} , Y_1 \right\rangle \notag\\
			& \qquad +\langle (\bar{t}-\EW{})(\bar{t}-\EW{})^\top , Y_2\rangle \notag \\
			& \qquad -\sum_{i\in I} \sign(\varepsilon_i) \mathbbm{1}_{T_i}(\bar{t}) y_i -L\delta_N\sqrt{m}\geq 0 && \forall \bar{t}\in T_N \label{Constr: discretized_purity_indicator}\\
			& \tilde{b}_{\bar{t} + e_j\delta_N}^i -\tilde{b}_{\bar{t}}^i = \Delta_{\bar{t}}^{-,i,j}-\Delta_{\bar{t}}^{+,i,j} && \forall \bar{t}\in T_N, i\in [k], j\in [m] \label{Constr: jump_def}\\
			& \sum_{\substack{l\in M_\delta:\\ \bar{t}=t_0+le_j}}
			 \Delta_{\bar{t}}^{-,i,j}+\Delta_{\bar{t}}^{+,i,j} \leq 2 && \forall i\in [k], j\in [m], t_0\in T_0^j \label{Constr: sum_delta_bound}\\
			& x_{ij}^-\geq \sum_{\substack{l\in M_\delta:\\ \bar{t}=t_0+le_j}} (l+\delta_N) \Delta_{\bar{t}}^{-,i,j}  && \forall i\in [k],j\in [m], t_0\in T_0^j \label{Constr: a-_lower_bound}\\
			& x_{ij}^+\leq M-\sum_{\substack{l\in M_\delta:\\ \bar{t}=t_0+le_j}} (M-l) \Delta_{\bar{t}}^{+,i,j}  && \forall i\in [k], j\in [m], t_0\in T_0^j \label{Constr: a+_upper_bound}\\
			& x_{ij}^+-x_{ij}^- \geq M \sum_{\substack{l\in M_\delta:\\ \bar{t}=t_0+le_j}} \Delta_{\bar{t}}^{+,i,j}\notag\\
			& \quad\ -\hspace{-0.38cm}\sum_{\substack{l\in M_\delta:\\ \bar{t}=t_0+le_j}} \left( (M-l) \Delta_{\bar{t}}^{+,i,j} - (l+\delta_N) \Delta_{\bar{t}}^{-,i,j}\right) && \forall i\in [k], j\in [m], t_0\in T_0^j \label{Constr: adiff_lower_bound}\\
			& 0 \leq x_{ij}^+ -x_{ij}^-\leq \delta_N (\sum_{\substack{l\in M_\delta:\\ \bar{t}=t_0+le_j}} \tilde{b}_{\bar{t}}^i-1) && \forall i\in [k],\forall j\in [m], t_0\in T_0^j \label{Constr: address_empty_set}\\
			& x_i^-,x_i^+\in C, y \in \R_{\geq 0}^I, Y_1 \in \Scal^{m+1}_{\succeq 0}, Y_2 \in \Scal^m_{\succeq 0}\\
			& \Delta_{\bar{t}}^{-,i,j},\Delta_{\bar{t}}^{+,i,j},\tilde{b}_{\bar{t}}^i\in \{0,1\},
		\end{align}
	\end{subequations}
	where $\tilde{b}_{\bar{t}}^i\coloneqq 0$ for every $\bar{t}\notin T_N$. 
\end{theorem}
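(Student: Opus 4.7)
The plan is to show that any feasible tuple of the MISDP \eqref{Prob: Dual_Purity_Constraint_indicator_discretized} yields a feasible tuple for the semi-infinite program \eqref{Prob: primal_linear_X_i}. The objectives coincide and \eqref{Constr: discretized_dual_objective_indicator_geq0} is literally \eqref{Constr: primal_linear_X_i_objective_constraint1}, so the only nontrivial task is to derive the semi-infinite inequality \eqref{Constr: primal_linear_X_i_objective_constraint2} from the finite collection \eqref{Constr: discretized_purity_indicator} together with the combinatorial constraints \eqref{Constr: jump_def}--\eqref{Constr: address_empty_set}. The overall strategy is to reduce the argument to Lemma \ref{Lemma:inner_approx}, mirroring the proof of Theorem \ref{Thm: SDP_for_indicator_with_varying_x_i}; what is new here is that $[x_i^-, x_i^+]$ is itself a decision variable, so one first has to certify that the binary pattern $\tilde{b}^i_{\bar{t}}$ together with the jump variables $\Delta^{\pm,i,j}_{\bar{t}}$ correctly encodes $\mathbbm{1}_{[x_i^-, x_i^+]}(\bar{t})$ at each grid point $\bar{t} \in T_N$ in the direction that is safe for the DRO constraint.

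Concretely, I would analyze the encoding coordinate-by-coordinate. Fix a direction $j \in [m]$ and a boundary point $t_0 \in T_0^j$. The recurrence \eqref{Constr: jump_def} together with the cap \eqref{Constr: sum_delta_bound} forces the one-dimensional sequence $\bigl(\tilde{b}^i_{t_0 + l e_j}\bigr)_{l \in M_\delta}$ to admit at most one up-jump (flagged by $\Delta^{-,i,j}$) and at most one down-jump (flagged by $\Delta^{+,i,j}$), consistent with the boundary convention $\tilde{b}^i_{\bar{t}} = 0$ for $\bar{t} \notin T_N$. The linking constraints \eqref{Constr: a-_lower_bound} and \eqref{Constr: a+_upper_bound} then tie the positions of these jumps to $x_{ij}^-$ and $x_{ij}^+$, respectively, while \eqref{Constr: adiff_lower_bound} rules out orientations of the jumps that would invert the interval, and \eqref{Constr: address_empty_set} handles the degenerate cases in which the hyperrectangle in direction $j$ is empty or trapped inside a single grid cell. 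Because $\mathbbm{1}_{[x_i^-, x_i^+]}(\bar{t}) = \prod_{j=1}^m \mathbbm{1}_{[x_{ij}^-, x_{ij}^+]}(\bar{t}_j)$, the $m$ one-dimensional analyses combine to identify $\sum_{i=1}^k x_i \tilde{b}^i_{\bar{t}}$ with the grid-evaluated $\sum_{i=1}^k x_i \mathbbm{1}_{[x_i^-, x_i^+]}(\bar{t})$ that appears in Lemma \ref{Lemma:inner_approx}.

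With this identification in place, \eqref{Constr: discretized_purity_indicator} is precisely the hypothesis $f(\bar{t}) - L \delta_N \sqrt{m} \geq 0$ of Lemma \ref{Lemma:inner_approx}, with the uniform Lipschitz constant $L$ of $p_Y$ provided by Lemma \ref{Lemma: Lipschitz_continuity_of_p_indicator}. The lemma therefore lifts the finite-dimensional inequality to $f^c(t) \geq 0$ for every $t \in T$, which is exactly \eqref{Constr: primal_linear_X_i_objective_constraint2}, and completes the feasibility argument for \eqref{Prob: primal_linear_X_i}. The main obstacle lies in the combinatorial bookkeeping in the middle step: the $\Delta^{\pm,i,j}_{\bar{t}}$ and the pattern $\tilde{b}^i_{\bar{t}}$ must be controlled simultaneously across all $m$ coordinate directions and all $k$ indicators, while pathological configurations — a spurious down-jump preceding an up-jump, a line on which $\tilde{b}^i$ equals one without any flagged up-jump, or an inverted interval $x_{ij}^+ < x_{ij}^-$ — must be excluded. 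These are exactly the pathologies that \eqref{Constr: sum_delta_bound}, \eqref{Constr: adiff_lower_bound} and \eqref{Constr: address_empty_set} were designed to prevent, so the technical work consists in carefully checking each case and verifying that it produces the intended safe grid-indicator.
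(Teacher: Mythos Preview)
Your proposal is correct and follows essentially the same route as the paper: first establish that the combinatorial constraints \eqref{Constr: jump_def}--\eqref{Constr: address_empty_set} force $\tilde{b}_{\bar{t}}^i = \mathbbm{1}_{[x_i^-,x_i^+]}(\bar{t})$ at every grid point via a coordinate-by-coordinate analysis of the up/down jumps, and then invoke Lemma \ref{Lemma:inner_approx} exactly as in Theorem \ref{Thm: SDP_for_indicator_with_varying_x_i}. The paper proves the identification as a genuine equality (both implications $\tilde{b}_{\bar{t}}^i=1 \Rightarrow \bar{t}\in[x_i^-,x_i^+]$ and its converse), so your phrase ``in the direction that is safe'' slightly undersells what is actually shown, but the structure and the role you assign to each constraint match the paper's argument.
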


We would like to point out, that we could also extend this model further. Indeed, instead of fixed $x_i$ in Theorem \ref{Thm: MIP_multidim}, we could additionally include $x_i$ as a bounded decision variable. This is due to the fact that for bounded $x_i$ the arising bilinear term $x_i\tilde{b}_{\bar{t}}^i$ in Constraint \eqref{Constr: discretized_purity_indicator} can be rewritten as a linear term with the help of additional big-M constraints.

\begin{proof}
	We consider a feasible solution $\Delta_{\bar{t}}^{-,i,j},\Delta_{\bar{t}}^{+,i,j},\tilde{b}_{\bar{t}}^i, x_i^-,x_i^+$ for \eqref{Prob: Dual_Purity_Constraint_indicator_discretized} and show that for every $i\in [k], \bar{t}\in T_N$ we have $\tilde{b}_{\bar{t}}^i=\mathbbm{1}_{[x_i^-,x_i^+]}(\bar{t})$. To this end, note that for every $i\in [k]$ there exists indeed an index $\bar{t}$ with $\tilde{b}_{\bar{t}}^i=1$ due to \eqref{Constr: address_empty_set}. Now, given an arbitrary index $\bar{t}$ with $\tilde{b}_{\bar{t}}^i=1$, we first show that $\tilde{b}_{\bar{t}}^i=1$ implies $\mathbbm{1}_{[x_i^-,x_i^+]}(\bar{t})=1$, i.e., $\bar{t}\in [x_i^-,x_i^+]$:
	
	\bigskip
	We first observe, that for every direction $j$, there exists a $t_0\in T_0^j$ and $\kappa_j\in \{0,\delta_N,2\delta_N,\ldots,M\}$ such that $$\bar{t} = t_0+\kappa_j e_j,$$ 
	i.e., we consider the line in direction $j$ passing through $\bar{t}$ and consequently through $t_0$ as well. Then, we define $\kappa_j^{\max}$ as the index of the last element on this line with $\tilde{b}_t^i=1$, i.e., 
	$$\kappa_j^{\max}\coloneqq \max \{ l\in \{0,\delta_N,2\delta_N,\ldots,M\}: \tilde{b}_{t_0+le_j}^i=1\}.$$
	Thus, $\tilde{b}_{t_0+(\kappa_j^{\max}+\delta_N)e_j}^i=0$ and \eqref{Constr: jump_def} implies $\Delta_{t_0+\kappa_j^{\max}e_j}^{-,i,j}=0, \Delta_{t_0+\kappa_j^{\max}e_j}^{+,i,j}=1$. Moreover, \eqref{Constr: a+_upper_bound} implies 
	\begin{equation}\label{Eq: a+_help}
		x_{ij}^+ \leq M-(M-\kappa_j^{\max})=\kappa_j^{\max}=\bar{t}_j + (\kappa_j^{\max}-\kappa_j),
	\end{equation}
	where the latter equality originates from the definition of $\kappa_j$ above.
	Similarly, we define
	$$\kappa_j^{\min}\coloneqq \min \{l\in \{0,\delta_N,2\delta_N,\ldots,M\}: \tilde{b}_{t_0+le_j}^i=1\}.$$
	Thus, $\tilde{b}_{t_0+(\kappa_j^{\min}-\delta_N)e_j}^i=0$ and \eqref{Constr: jump_def} implies $\Delta_{t_0+(\kappa_j^{\min}-\delta_N)e_j}^{-,i,j}=1, \Delta_{t_0+(\kappa_j^{\min}-\delta_N)e_j}^{+,i,j}=0$. Moreover, \eqref{Constr: a-_lower_bound} implies
	\begin{equation}\label{Eq: a-_help}
		x_{ij}^- \geq (\kappa_j^{\min}-\delta_N)+\delta_N=\kappa_j^{\min} = \bar{t}_j + \kappa_j^{\min}-\kappa_j.
	\end{equation}
	However, due to \eqref{Constr: sum_delta_bound} we know that these are the only nonzero entries for $\Delta_{t_0+le_j}^{-,i,j},\Delta_{t_0+le_j}^{+,i,j}$. Thus due to \eqref{Constr: adiff_lower_bound}, we obtain
	$$x_{ij}^+-x_{ij}^- \geq M - (M-\kappa_j^{\max})-\kappa_j^{\min} = \kappa_j^{\max}-\kappa_j^{\min},$$  
	which implies equality in both \eqref{Eq: a+_help} and \eqref{Eq: a-_help} and thus $\bar{t}_j=\kappa_j\in [\kappa_j^{\min},\kappa_j^{\max}]=[x_{ij}^-, x_{ij}^+]$ for every index $\bar{t}\in T_N$ with $\tilde{b}_{\bar{t}}^i=1$.
	\\
	\\
	For the reverse implication, we need to show that $\bar{t}\in [x_i^-,x_i^+]$ implies $\tilde{b}_{\bar{t}}^i=1$. Due to \eqref{Constr: address_empty_set}, we obtain that $[x_i^-,x_i^+]\neq \emptyset$ implies the existence of a $\bar{t}$ with $\tilde{b}_{\bar{t}}^i=1$. In particular, the previous implication shows that $\bar{t}\in [x_i^-,x_i^+]$.	Beginning with this $\bar{t}$, we prove the following claim for an arbitrary direction $j$:
	
	\begin{equation}\label{Eq: claim_main_Thm}
		\tilde{b}_{\bar{t}}^i=1 \text{ implies } \tilde{b}_{\bar{t}+le_j}^i =1 \text{ for every } l: \bar{t}_j+l\in [x_{ij}^-, x_{ij}^+].
	\end{equation}
	Let $\bar{t}=t_0+\kappa_je_j$ with $t_0\in T_0^j$ as above. Then, with the same definitions for $\kappa_j^{\min},\kappa_j^{\max}$, the arguments from the previous implication, that led to equality in \eqref{Eq: a+_help} and \eqref{Eq: a-_help} imply $\kappa_j^{\min}=x_{ij}^-$, $\kappa_j^{\max}=x_{ij}^+$. Moreover, the definition of $\kappa_j^{\min}, \kappa_j^{\max}$ leads to:
	$$1=\tilde{b}_{t_0+\kappa_j^{\min}e_j}^i=\tilde{b}_{t_0+(\kappa_j^{\min}+\delta_N)e_j}^i=\ldots =\tilde{b}_{t_0+\kappa_j^{\max}e_j}^i=1$$
	with $(t_0+\kappa_j^{\min}e_j)_j=x_{ij}^-$ and $(t_0+\kappa_j^{\max}e_j)_j=x_{ij}^+$.
	Hence, our claim \eqref{Eq: claim_main_Thm} follows and as the direction $j$ was chosen arbitrarily, we obtain that $\mathbbm{1}_{[x_i^-,x_i^+]}(\bar{t})=1$ also implies $\tilde{b}_{\bar{t}}^i=1$.
\end{proof}

Theorem \ref{Thm: MIP_multidim} yields a
sufficient criterion for the DRO constraint to be satisfied. This is a
considerable advantage as to our knowledge no practically efficient alternative approach is readily available. Positive semidefinite optimization is algorithmically
tractable, and recent research has been successful in enhancing global solution algorithms when binary variables are present as well. Nevertheless, solving a binary SDP is still more elaborate than solving binary linear optimization models. As a result, \eqref{Prob: Dual_Purity_Constraint_indicator_discretized} may be computationally too involved even for modern solvers for a large cardinality of $T_N$. For one-dimensional domains $T$ as considered in \cite{Dienstbier2023a} this challenge has been addressed as follows: Instead of bounding the slope of $p_Y$ through its Lipschitz constant $L$, more elaborate bounds that strengthen Lemma \ref{Lemma:inner_approx} reduce the number of
necessary sample points for a good approximation of \eqref{Prob: primal_linear_X_i}. Moreover, due to the one-dimensional domain $T$, instead of a binary SDP, we obtain a binary MIP as an approximation of \eqref{Prob: primal_linear_X_i} that can typically be solved much faster in practice. We next show some preliminary computational results for the SDP model presented here.


\section{Computational Results}
\label{sec:computational_results}

In this section, we show some preliminary computational results 
for solving model \eqref{Prob: Dual_Purity_Constraint_indicator_discretized} via available binary SDP solvers. We restrict ourselves to solving an illustrative toy example that is easily comprehensible.  


\begin{example}[Bin creating problem]
	Given an $m$-dimensional random variable $t\in T=[0,M]^m$, where focus on $m=2, M=1$ here. Let it be distributed according to a distribution that is contained in a set of probability distributions $\Pcal$. We suppose further that the best known estimates for the expectation of $t$ is $\mu=(0,0)^\top$ and $\Sigma=\begin{pmatrix}
		2 & 0.5 \\
		0.5 & 1
	\end{pmatrix}$. Then, we ask for a representative box $[x^-,x^+]\in \R^2$ for the ambiguity set $\Pcal$ as follows:
	\begin{subequations}\label{prob:bin_creating}
		\begin{align}
			\min_{x_1^-,x_1^+,x_2^-,x_2^+} & |x_1^+-x_1^-| + |x_2^+-x_2^-| \label{eq:example_objective}\\
			\text{ s.t.}~ & 0.1 \leq \min_{\P\in \Pcal} \P([x_i^-,x_i^+]) && \forall i\in \{1,2\}, \label{constr:DRO_example}\\ 
			& x_i^+ - x_i^- \geq 0 && \forall i\in \{1,2\},\\
			& x_1^-,x_1^+,x_2^-,x_2^+\in \R_{\geq 0},
		\end{align}
	\end{subequations}
\end{example}
where, due to $\varepsilon_\mu=0.1, \varepsilon_\Sigma=1$, we set 
$$\Pcal=\{\P\in \Mcal(T)_{\geq 0}: \P(T)=1, (\E_{\P}(t)-\mu)^\top \Sigma (\E_{\P}(t)-\mu) \leq 0.1, \text{Var}(t) \preceq\Sigma \}.$$ Since our primary interest lies in the characteristic behavior of the approach, we refrain from introducing additional constraints into the model.

We note that \eqref{eq:example_objective} can be linearized by adding auxiliary variables $z_1,z_2\in \R_{\geq 0}$, the additional constraints $z_i \geq \pm (x_i^+ - x_i^-)$ and by replacing the objective by $z_1 + z_2$. Moreover, to specify the continuous counterpart of the indicator functions in a manner to satisfy the key property \eqref{Eq: Urysohn_approx}, we set
$$\mathbbm{1}_{[x_i^-,x_i^+]}^c(s) = \frac{d_H(\{s\},(-\infty,x_i^--\delta_N] \cup [x_i^+ + \delta_N,\infty))}{d_H(\{s\},[x_i^-,x_i^+]) + d_H(\{s\},(-\infty,x_i^--\delta_N] \cup [x_i^+ + \delta_N,\infty))},$$
where $d_H$ denotes the \emph{Hausdorff distance} on $\R$, i.e., 
$$d_H(X,Y)\coloneqq \max\left\{ \sup_{x\in X}\inf_{y\in Y} |x-y|,\sup_{y\in Y}\inf_{x\in X} |x-y|  \right\}.$$
We note that $\mathbbm{1}_{[x_i^-,x_i^+]}^c(s) \geq \mathbbm{1}_{[x_i^-,x_i^+]}(s)$ since $\mathbbm{1}_{[x_i^-,x_i^+]}^c(s)\geq 0$ and for $s\in [x_i^-,x_i^+]$, we have that $\mathbbm{1}_{[x_i^-,x_i^+]}^c(s)=1$. The continuity is given by the Lemma of Urysohn, see e.g. \cite{munkrestopology}.

As the univariate functions in the maximum term are both nonnegative, the above inequality holds for their respective product as well and we obtain
\begin{align*}
	\mathbbm{1}_{[x^-,x^+]}^c(t) & \coloneqq \prod_{i=1}^m \mathbbm{1}_{[x_i^-,x_i^+]}^c(t_i) \geq \prod_{i=1}^m \mathbbm{1}_{[x_i^-,x_i^+]}(t_i) = \mathbbm{1}_{[x^-,x^+]}(t).
\end{align*}
The indicator function $\mathbbm{1}_{T_i}$ is the constant function having value one and thus continuous for $T_i=T$. Therefore, \eqref{Eq: Urysohn_approx} is satisfied.

Hence, the only parameter not yet determined is the Lipschitz constant $L$ of the polynomial $p_Y(t)=\left\langle \begin{bmatrix}
	\Sigma & t-\EW{}\\
	(t-\EW{})^\top & \varepsilon_{\EW{}}
\end{bmatrix} , Y_1 \right\rangle +\langle (t-\EW{})(t-\EW{})^\top , Y_2\rangle.$
However, if we follow the proof in Lemma \ref{Lemma: Lipschitz_continuity_of_p_indicator}, we obtain an upper bound of
\begin{equation*}
	\lambda_{\min}\left(\begin{bmatrix}
		\Sigma & 0 \\ 0 & \varepsilon_{\EW{}}
	\end{bmatrix}\right) \Tr(Y_1) \leq \sum_{i=1}^k x_i \mathbbm{1}_{X_i}^c(\EW{}) - \sum_{i\in I} \sign(\varepsilon_i) y_i \leq k + y_1 - y_2 \leq 1 + b =1.1
\end{equation*}
due to \eqref{Eq: help3}. Hence, with $\lambda_{\min}\left(\begin{bmatrix} 2 & 0.5 & 0\\ 0.5 & 1 & 0\\ 0 & 0 & 1\end{bmatrix}\right) = \frac{3-\sqrt{2}}{2} \approx 0.8$, we obtain $\Tr(Y_1) \leq 1.4$. Similarly, we obtain an upper bound of 
\begin{equation*}
	\varepsilon_\Sigma \lambda_{\min}(\Sigma) \Tr(Y_2) \leq \sum_{i\in I} \varepsilon_i y_i = -y_1 + y_2 \leq 1
\end{equation*}
due to \eqref{Eq: LP_help}. We have also invested that $\varepsilon = \tau = (-1,1)^\top$. Hence, with $\lambda_{\min}(\Sigma) \approx 0.8$, we obtain $\Tr(Y_2) \leq 1.3$.

With these bounds it is now possible to determine a Lipschitz constant for $p_Y$. Please note, that the following proposition holds for general values of $m$ and can be combined with the above bounds 
$$\Tr(Y_1) \leq \frac{1+b}{	\lambda_{\min}\left(\begin{bmatrix}
		\Sigma & 0 \\ 0 & \varepsilon_{\EW{}}
	\end{bmatrix}\right)} \text{ and } \Tr(Y_2) \leq \frac{1}{\varepsilon_\Sigma \lambda_{\min}(\Sigma)},$$
if $\Sigma \succ 0, \varepsilon_{\EW{}},\varepsilon_\Sigma >0$ in order to determine a potential Lipschitz constant for \eqref{Constr: discretized_purity_indicator}.
\begin{prop}
	Let $t\in T=[0,M]^m$, $\mu_{\min}\coloneqq \min_{i\in [m]}\{\mu_i\}$ and assume that $\mu_{\min} \leq \frac{M}{2}$. Then, the polynomial is Lipschitz continuous on the compact set $T$ with Lipschitz constant
	$$L=2\Tr(Y_1) + (M-\mu_{\min})\Tr(Y_2)2\sqrt{m}.$$
\end{prop}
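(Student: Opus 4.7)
The plan is to bound the Euclidean gradient $\nabla p_Y(t)$ uniformly on $T$ and then apply the mean value inequality on the convex set $T$ to convert this into the claimed Lipschitz constant. To this end, I would first decompose
\[
Y_1 \;=\; \begin{bmatrix} A & b \\ b^\top & c \end{bmatrix}, \qquad A\in\Scal^m,\ b\in\R^m,\ c\in\R,
\]
and evaluate the Frobenius traces to write
\[
p_Y(t) \;=\; \langle\Sigma,A\rangle_F \,+\, \varepsilon_{\EW{}}\,c \,+\, 2\,b^\top(t-\EW{}) \,+\, (t-\EW{})^\top Y_2(t-\EW{}),
\]
so that $\nabla p_Y(t) = 2b + 2\,Y_2(t-\EW{})$. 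It therefore suffices to bound $2\|b\|_2$ and $2\|Y_2\|_{\text{op}}\cdot\sup_{t\in T}\|t-\EW{}\|_2$ separately.

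For the affine contribution, the key observation is that $Y_1\succeq 0$ forces every $2\times 2$ principal minor to be nonnegative, and in particular the minor indexed by $\{j,m+1\}$ gives $b_j^2 \le A_{jj}\,c$ for every $j\in[m]$. Summing over $j$ yields
\[
\|b\|_2^2 \;\le\; c\,\Tr(A) \;\le\; \Tr(Y_1)^2,
\]
where the last step uses that $c$ and $\Tr(A)$ are both nonnegative addends of $\Tr(Y_1)$, so each is itself bounded by $\Tr(Y_1)$. This accounts for the first summand $2\,\Tr(Y_1)$ of $L$.

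For the quadratic contribution, $Y_2 \succeq 0$ gives $\|Y_2\|_{\text{op}} = \lambda_{\max}(Y_2) \le \Tr(Y_2)$, so it only remains to estimate $\sup_{t\in T}\|t-\EW{}\|_2$. Arguing coordinate-wise on $[0,M]$ one has $|t_j-\EW{}_j|\le\max(\EW{}_j,\,M-\EW{}_j)$, and the hypothesis $\mu_{\min}\le M/2$ combined with the implicit condition $\EW{}\in T$ yields the uniform bound $|t_j-\EW{}_j|\le M-\mu_{\min}$; passing from $\ell_\infty$ to $\ell_2$ gives $\|t-\EW{}\|_2\le\sqrt{m}\,(M-\mu_{\min})$. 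Combining both estimates,
\[
\sup_{t\in T}\|\nabla p_Y(t)\|_2 \;\le\; 2\,\Tr(Y_1) + 2\sqrt{m}\,(M-\mu_{\min})\,\Tr(Y_2) \;=\; L,
\]
and the mean value inequality on the convex set $T$ concludes the argument.

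The only step that I expect to need more than routine manipulation is the last, geometric one: the coordinate-wise bound $|t_j-\EW{}_j|\le M-\mu_{\min}$ does not follow from $\mu_{\min}\le M/2$ alone without also using $\EW{}\in T$, and a careful case distinction on whether $\EW{}_j$ lies below or above $M/2$ is what ultimately pins down the factor $M-\mu_{\min}$. The PSD-trace bounds on $\|b\|_2$ and $\|Y_2\|_{\text{op}}$ and the gradient-to-Lipschitz reduction are standard.
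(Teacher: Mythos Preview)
Your approach is genuinely different from the paper's: the paper estimates $p_Y(t)-p_Y(t')$ directly, applying the Cauchy--Schwarz inequality in the Frobenius inner product to each of the two summands and then expanding the difference of outer products algebraically. Your gradient route is cleaner --- computing $\nabla p_Y(t)=2b+2Y_2(t-\EW{})$ and bounding $\|b\|_2\le\Tr(Y_1)$ via the $2\times2$ principal minors of $Y_1$ avoids the somewhat intricate Frobenius-norm manipulations the paper carries out, and makes transparent where each of the two trace terms in $L$ comes from.

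That said, the step you yourself single out as delicate does \emph{not} go through under the stated hypothesis. The case distinction you sketch fails when some coordinate $\EW{}_j>M/2$: take $m=2$, $M=1$, $\EW{}=(0.1,\,0.95)^\top$, so that $\mu_{\min}=0.1\le M/2$ and $\EW{}\in T$. For $t=(1,0)^\top\in T$ one gets $\|t-\EW{}\|_2=\sqrt{0.81+0.9025}\approx1.309$, whereas $\sqrt{m}\,(M-\mu_{\min})=\sqrt{2}\cdot0.9\approx1.273$; already the coordinate bound $|t_2-\EW{}_2|=0.95>0.9=M-\mu_{\min}$ is violated. So neither the coordinate-wise estimate nor the resulting $\ell_2$ bound holds in general under $\mu_{\min}\le M/2$ alone. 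The paper's own argument relies on the same coordinate bound and therefore shares this gap. The natural repair is to strengthen the hypothesis to $\EW{}_j\le M/2$ for every $j$ (equivalently $\mu_{\max}\le M/2$), after which your argument --- and the factor $M-\mu_{\min}$ in the constant --- go through exactly as you outline.
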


\begin{proof}
	We first observe that 
	$$p_Y(t) - p_Y(t') = \left\langle \begin{bmatrix}
		0 & t-t'\\
		(t-t')^\top & 0
	\end{bmatrix} , Y_1 \right\rangle +\langle (t-\EW{})(t-\EW{})^\top - (t'-\EW{})(t'-\EW{})^\top , Y_2\rangle.$$
	We continue by bounding the terms separately. For the first term we obtain:
	\begin{align*}
		\left\langle \begin{bmatrix}
			0 & t-t'\\
			(t-t')^\top & 0
		\end{bmatrix} , Y_1 \right\rangle & \leq \|\begin{bmatrix}
		0 & t-t'\\
		(t-t')^\top & 0
		\end{bmatrix}\|_F \|Y_1\|_F\\
		& = \sqrt{2\|t-t'\|_2^2} \sqrt{\Tr(Y_1^\top Y_1)}\\
		& = \sqrt{2} \|t-t'\|_2 \sqrt{\sum_{l=1}^{m+1} \lambda_l^2}\\
		& \leq \sqrt{2} \|t-t'\|_2 \Tr(Y_1),
	\end{align*}
	where the first inequality is due to Cauchy-Schwartz' inequality and the last step is due to the equivalence of norms. Similarly,
	\begin{align*}
		&\langle (t-\EW{})(t-\EW{})^\top - (t'-\EW{})(t'-\EW{})^\top , Y_2\rangle  \\&\leq \sum_{i,j\in [m]} [(t_i-\mu_i)(t_j-\mu_j) - (t_i'-\mu_i)(t_j'-\mu_j)]\\
		& = \Tr(Y_2) \sqrt{\sum_{i,j\in [m]} [t_it_j - t_i\mu_j -\mu_it_j - t_i't_j' + t_i'\mu_j + \mu_it_j']^2}\\
		& = \Tr(Y_2) \sqrt{\sum_{i,j\in [m]} [t_it_j - t_i't_j'  - (t_i-t_i')\mu_j -\mu_i (t_j-t_j')]^2}\\
		& = \Tr(Y_2) \sqrt{\sum_{i,j\in [m]} [(t_i-t_i')t_j + t_i'(t_j-t_j')  - (t_i-t_i')\mu_j -\mu_i (t_j-t_j')]^2}\\
		& = \Tr(Y_2) \sqrt{\sum_{i,j\in [m]}  [(t_j-\mu_j) (t_i-t_i') + (t_i'-\mu_i)(t_j-t_j')]^2}\\
		& \leq \Tr(Y_2)(M-\mu_{\min} ) \sqrt{ \sum_{i,j\in [m]} [(t_i-t_i') + (t_j-t_j')]^2}\\
		& = \Tr(Y_2)(M-\mu_{\min} ) 2\sqrt{m} \|t-t'\|_2,
	\end{align*}
where we utilize Cauchy-Schwartz for the first inequality as above and apply the assumption $\mu_{\min} \leq \frac{M}{2}$. 
\end{proof}
We note that for random vectors where the assumption $\mu_{\min} \leq \frac{M}{2}$ is violated, one may also be able to represent Problem \eqref{prob:bin_creating} appropriately by choosing the domain $[0,M]^m +\mu_{\min}$.

With these notations, the model \eqref{Prob: Dual_Purity_Constraint_indicator_discretized} could now be solved by an appropriate binary SDP solver. For this specific toy example, however, the problem structure can be exploited further to lead to a smaller model.
Namely, we simplify the safe approximation \eqref{Prob: Dual_Purity_Constraint_indicator_discretized} to the following model
\begin{subequations}\label{prob:safe_approx_for_bin_creating}
	\begin{align}
		\min\ & x^+_1-x^-_1 + x^+_2-x^-_2\\
		\st~\ & -y_1+y_2 -\varepsilon_\Sigma \langle \Sigma, Y_2 \rangle \geq b \\
		& \tilde{b}_{\bar{t}} - \left\langle \begin{bmatrix}
			\Sigma & \bar{t}-\EW{}\\
			(\bar{t}-\EW{})^\top & \varepsilon_1
		\end{bmatrix} , Y_1 \right\rangle \notag\\
		& \qquad +\langle (\bar{t}-\EW{})(\bar{t}-\EW{})^\top , Y_2\rangle \notag \\
		& \qquad -\sum_{i\in I} \sign(\varepsilon_i) \mathbbm{1}_{T_i}(\bar{t}) y_i -L\delta_N\sqrt{m}\geq 0 && \forall \bar{t}\in T_N \label{constr:dual_purity_bin_creating}\\
		& x_{j}^+ - x_{j}^- \geq \delta_N (\sum_{\substack{l\in M_\delta:\\ \bar{t}=t_0+le_j}} \tilde{b}_{\bar{t}}^i-1) && \forall i\in [k],\forall j\in [m], t_0\in T_0^j \\
		& x^-,x^+\in [0,M]^m, y \in \R_{\geq 0}^I, Y_1 \in \Scal^{m+1}_{\succeq 0}, Y_2 \in \Scal^m_{\succeq 0}\\
		& \tilde{b}\in \{0,1\}^{T_N}.
	\end{align}
\end{subequations}
The problem formulation \eqref{prob:safe_approx_for_bin_creating} strongly depends on the discretization width. On the one hand, if $\delta_N$ is too large, the safety term $L\delta_N\sqrt{m}$ in \eqref{constr:dual_purity_bin_creating} leads to an infeasible model. On the other hand, if $\delta_N$ is too small, the number of binary variables $\tilde{b}_{\bar{t}}$ in \eqref{prob:safe_approx_for_bin_creating} grows polynomially in this width, for $m=2$ quadratically.
Despite the considerable progress in solving binary SDPs in recent years, these models are still demanding for a large number of binary variables. We thus vary the discretization width in our experiments.  


The computational experiments have been performed with the parameters $c,\varepsilon_i,\varepsilon_\mu, \varepsilon_\Sigma, \mu, \Sigma, L$ and $M$ chosen as above. In order to solve the resulting instance of ~\eqref{prob:safe_approx_for_bin_creating}, we utilized the state-of-the-art solver SCIP-SDP as presented in
e.g. \cite{Gally2018a}, where the SDP subproblems have been solved by MOSEK \cite{mosek}. The computations have been executed on a MacBookAir 2024 with M3 chip and 16GB memory.


\begin{figure}[h]
	\pgfplotsset{width=6.5cm,compat=1.18}
	\pgfplotsset{domain=0:0.1}
	\begin{tikzpicture}[baseline]
		\begin{axis}[xlabel = objective value with decreasing $\delta_N$, x dir=reverse, xticklabel style={/pgf/number format/fixed}]
			\addplot+[sharp plot,] coordinates {
				(0.1,2) (0.083,2) (0.066666, 1.8) (0.05, 1.7) (0.04, 1.64)
			};
		\end{axis}
	\end{tikzpicture}
	\begin{tikzpicture}[baseline]
		\begin{axis}[xlabel = runtime with decreasing $\delta_N$ in seconds, x dir=reverse, xticklabel style={/pgf/number format/fixed}]
			\addplot+[sharp plot,] coordinates {
				(0.1,1) (0.083,1) (0.066666, 12) (0.05, 12) (0.04, 400)
			};
		\end{axis}
	\end{tikzpicture}
	\caption{Development of objective values and runtime for $M=1$, $b=0.1$, $\varepsilon_\mu=0.1$, $\varepsilon_\Sigma=1$, $\mu=0$ and $\Sigma=\begin{bmatrix}
			2 & 0.5\\ 0.5 & 1
	\end{bmatrix}$.}
\label{fig:compres}
\end{figure}

Figure~\ref{fig:compres} displays the objective function value of the objective as a function of discretization width $\delta_N$ (left) and the running time as a function of $\delta_N$ (right). Here, the choice of $\delta_N$ is crucial as the number of binary variables in \eqref{prob:safe_approx_for_bin_creating} is given by $\left(\frac{M}{\delta_N}\right)^m$, here $\left(\frac{1}{\delta_N}\right)^2$. 

It is worth noting that the solution $x^-_i=-1, x^+_i=1, \tilde{b}=1, y_2=b$ and vanishing $Y_1,Y_2,y_i$ for $i\neq 2$ is feasible if $1-b-L\delta_N \sqrt{m} \geq 0$ $\delta_N \leq \frac{1-b}{L\sqrt{m}}$. Thus, choosing $\delta_N \leq \frac{1-b}{L\sqrt{m}}$ guarantees a feasible safe approximation, but smaller values for $\delta_N$ reduce the safety term $L\delta_N\sqrt{m}$ and thereby enable a less conservative approximation of the original problem \eqref{prob:bin_creating}. From the left figure, we observe that this allows for a smaller box to capture the required probability mass of $10\%$.

However, as the number of binary variables increases at a rate of $\left(\frac{1}{\delta_N}\right)^2$, the required running time quickly increases as well, as can be seen in the right figure. While model~\eqref{prob:safe_approx_for_bin_creating} can be solved to global optimality within a few seconds for a discretization width of up to $0.05$, the required running time for $\delta_N=0.04$ is about 400 seconds. As the reduction in objective function value slows down for smaller discretization width, it can be assumed that a discretization of $\delta_N=0.04$ reasonably balances running time and quality of the safe approximation. We do not display smaller discretization widths here as the corresponding runtime exceeded a limit of $1$h.

This section has served to illustrate the safe approximation via an academic example. It is evident that the model \eqref{Prob: Dual_Purity_Constraint_indicator_discretized} as well as its variant~\eqref{prob:safe_approx_for_bin_creating} can be computationally demanding, in particular for small discretization width. There are two future research directions to mitigate this: First, binary SDP solvers are expected to improve further over time, as this is an active area of research. Second, model~\eqref{prob:safe_approx_for_bin_creating} is a generic formulation designed to accommodate the general problem structure. To the best of our knowledge, it is the first such safe approximation for multivariate simple functions presented in the literature. By exploiting specific problem structures, it is expected that the formulation can be made smaller, thus leading to an improved solvability.




\section{Conclusion}
\label{Sec: Conclusion}

In this paper, we present an extension of the novel approach in \cite{Dienstbier2023a} for distributionally robust optimization problems to cases, where multivariate simple functions are allowed. As simple functions can be included in the model, the presented approximation pushes the applicability of duality-based reformulations of distributional robustness significantly beyond convexity. Moreover, early convergence results from \cite{Dienstbier2023a} for univariate indicator functions indicate, that the presented approximation may converge to the actual optimum. A proof for this convergence as well as an extension from simple functions to more general functions is a desirable goal for future research.

With respect to algorithmic tractablilty, we have shown that a suitably discretized safe
approximation yields a mixed-integer positive-semidefinite optimization
model making it eligible for recent MISDP approaches as presented in
e.g. \cite{Gally2018a} or the YALMIP framework
\cite{Lofberg2004a}. Thus, the presented formulations is tractable by
using state-of-the-art solvers for MISDP, which we have also shown by computational results for an academic example. As running times strongly scale with the discretization width, future research will aim to downsizing the model without incurring loss in obtained quality.

\section*{Acknowledgments}
\label{sec:acknowledgements}
%
The paper is funded by the Deutsche Forschungsgemeinschaft (DFG, German Research Foundation) - Project-ID 416229255 - SFB 1411.

\printbibliography

\end{document}